\newcommand{\mc}{\mathcal}
\newcommand{\doi}[1]{\href{http://dx.doi.org/#1}{\normalsize{\textsc{doi:}}~\nolinkurl{#1}}}
\newcommand{\arxiv}[1]{\href{http://arxiv.org/abs/#1}{\normalsize{\textsc{arxiv:}}~\nolinkurl{#1}}}
\let\oldphi\phi
\renewcommand{\epsilon}{\varepsilon}
\renewcommand{\phi}{\varphi}
\newcommand{\R}{\mathbb{R}}
\newcommand{\N}{\mathbb{N}}
\renewcommand{\implies}{\;\Rightarrow\;}
\let\originalleft\left
\let\originalright\right
\renewcommand{\left}{\mathopen{}\mathclose\bgroup\originalleft}
\renewcommand{\right}{\aftergroup\egroup\originalright}
\def\clap#1{\hbox to 0pt{\hss#1\hss}}
\newcommand{\norm}[1]{\left\lVert #1\right\rVert}
\newcommand{\set}[1]{\left\{ #1\right\}}
\let\p=\paren
\newcommand{\sqparen}[1]{\left[ #1\right]}
\let\sp=\sqparen
\newcommand{\derivoper}{\mathrm{d}}
\newcommand{\deriv}[3][]{%
  \ifx\relax#1\relax{
    \frac{\derivoper #2}{\derivoper #3}%
  }\else{%
    \frac{\derivoper^{#1} #2}{\derivoper #3^{#1}}%
  }\fi%
}
\newcommand{\pderiv}[3][]{%
  \ifx\relax#1\relax{
    \frac{\partial #2}{\partial #3}%
  }\else{%
    \frac{\partial^{#1} #2}{\partial #3^{#1}}%
  }\fi%
}
\newcommand{\grad}[2][]{%
  \ifx\relax#1\relax{
    \nabla #2%
  }\else{%
    \nabla_{#1} #2%
  }\fi%
}
\newcommand{\diver}[2][]{%
  \ifx\relax#1\relax{
    \nabla \cdot #2%
  }\else{%
    \nabla_{#1} \cdot #2%
  }\fi%
}
\newcommand{\curl}[2][]{%
  \ifx\relax#1\relax{
    \nabla \times #2%
  }\else{%
    \nabla_{#1} \times #2%
  }\fi%
}
\newcommand{\lapl}[2][]{%
  \ifx\relax#1\relax{
    \Delta #2%
  }\else{%
    \Delta_{#1} #2%
  }\fi%
}
\newcommand{\smat}[1]{%
  \begin{bmatrix}#1 \end{bmatrix}%
}
\newcommand{\smats}[1]{%
  \sqparen{\begin{smallmatrix}#1 \end{smallmatrix}}%
}
\newtheorem{assumption}{Assumption}
\newtheorem{definition}{Definition}
\newtheorem{lemma}{Lemma}
\newtheorem{theorem}{Theorem}
\newtheorem{proposition}{Proposition}
\DeclareMathAlphabet{\mathpzc}{OT1}{pzc}{m}{it}
\begin{document}
\title{%
Technichal Report: Optimal Control of Piecewise-Smooth Control Systems  via Singular Perturbations}
\author{$\text{Tyler Westenbroek}$ \thanks{ Dept of EECS., University of California, Berkeley, westenbroekt@berkeley.edu. Corresponding Author.
        }  \and $\text{Xiaobin Xiong}$ \thanks{ Dept. of Mechanical and Civil Engineering, California Institute of Technology, 
        xxiong@caltech.edu
        }%
         \and $\text{Aaron D. Ames}$ \thanks{ Dept. of Mechanical and Civil Engineering, California Institute of Technology, 
        ames@caltech.edu
        }%
 \and $\text{S. Shankar Sastry}$ \thanks{ Dept of EECS., University of California, Berkeley, sastry@berkeley.edu}
        } 

\maketitle
\begin{abstract}
This paper investigates optimal control problems formulated over a class of piecewise-smooth vector fields. Instead of optimizing over the discontinuous system directly, we instead formulate optimal control problems over a family of regularizations which are obtained by "smoothing out" the discontinuity in the original system. It is shown that the smooth problems can be used to obtain accurate derivative information about the non-smooth problem, under standard regularity conditions. We then indicate how the regularizations can be used to consistently approximate the non-smooth optimal control problem in the sense of Polak. The utility of these smoothing techniques is demonstrated in an in-depth example, where we utilize recently developed reduced-order modeling techniques from the dynamic walking community to generate motion plans across contact sequences for a 18-DOF model of a lower-body exoskeleton. 
\end{abstract}


\section{Introduction}
Non-smooth dynamical systems naturally arise when modelling a vast array of engineering systems, and are familiar to researchers in areas ranging from the dynamic walking community \cite{xiong2018coupling}, \cite{westervelt2003hybrid} to the domain of power systems analysis \cite{hiskens2000trajectory}. However, despite the pervasive nature of discontinuous dynamics in systems theory and control, optimizing system trajectories through unplanned sequences of discontinuities remains a distinct technical challenge.

In particular, non-smooth systems are well known to display non-differentiability with respect to initial conditions and inputs \cite[Chapter 2, Section 11]{filippov2013differential}. This fundamental departure from smooth control theory has meant that the powerful optimization-based techniques designed for smooth control systems \cite{polak2012optimization} cannot be readily applied to the non-smooth setting. As an example, in the context of optimal control, this has traditionally led to the implementation of restrictive practices, such as forming large mixed-integer programs to reason over all possible "mode" sequences the trajectories may undergo \cite{borrelli2017predictive}, or limiting the class of system behaviors by fixed the number and sequence of discontinuities the trajectory of the system encounters \emph{a priori} \cite{kamgarpour2012optimal}, \cite{shaikh2007hybrid}.

\subsection{Contributions}
In this work, we formulate an optimal optimal control problem over a class of piecewise-smooth vector fields. Rather than reasoning through the discontinuities of the system directly, we instead study a family of regularizations which are obtained by "smoothing" the original non-smooth system  along the discontinuity. We then formulate approximate optimal control problems over these smooth control systems, which can be solved using standard derivative-based techniques. Theoretical guarantees are discussed, demonstrating when the regularized problem can be used to obtain accurate derivative information about the non-smooth problem. We then discuss how this result can be used to derive consistent approximations in the sense of Polak \cite{polak2012optimization}.

Finally, we use the smoothing approach to generate trajectories for a model of a bipedal robot, specifically a powered exoskeleton, which intermittently makes contact with the ground. In particular, we build on \cite{xiong2018bipedal}, \cite{xiong2019exo}, which introduced a class of embedded reduced-order modeling techniques to simplify the complexity of the motion planning problem. We believe the unification of these techniques represents an important new direction for robotic trajectory optimization through contact.
\subsection{Relation to the Literature}
The roots of the smoothing technique considered in this paper can be traced back to the literature on \emph{geometric singular perturbation theory} (see e.g. \cite{sastry2013nonlinear} or \cite{khalil1996noninear}). The  precise smoothing approach considered in this paper were first considered for autonomous piecewiese-smooth systems in work pioneered by Teixeira  \cite{llibre2008sliding}, \cite{llibre1997regularization}.  Emerging applications of these smoothing techniques in the control community can be found in  \cite{fiore2017observer}, \cite{fiore2016contraction}. The authors have also recently been made aware of a contribution \cite{stewart2010optimal} which appears to have been made independently of the above literature and in which similar smoothing techniques were also considered in the context of optimal control. Our analysis builds on the results of \cite{stewart2010optimal} by requiring fewer technical assumptions, proving a concrete rate of convergence in Theorem \ref{thm:convergence1}, and formalizing the connection between the minimizers of the regularized and discontinuous problems in Section \ref{sub:minimizers}. Finally we mention a recent contribution \cite{westenbroek2018new}, which builds on the geometric theory of hybrid systems \cite{simic2005towards},\cite{burden2015metrization},\cite{ames2005homology}, in which the smoothing techniques discussed in this paper were extended to a class of hybrid dynamical systems where the trajectories of the system undergo autonomous "jumps". We hope to extend the results presented here to this more general setting in a forthcoming article.

\label{sec:intro}

\subsection{Notation}
\label{sec:math}
We now fix notation used throughout the document. Unless otherwise noted, the 2-norm is our choice of norm for finite dimensional spaces, and the $L^2$ norm is our norm of choice for function spaces. We say that a function $f \colon \R^n \to \R^p$ is \emph{Lipschitz continuous} if there exists $L >0$ such that for each $x_1,x_2 \in R^n$ we have $||f(x_1)- f(x_2)|| \leq L ||x_1 - x_2||$.  Given a smooth function $h \colon \R^n \to \R$ and a controlled vector field $f \colon \R^n \times \R^m \to \R^n$, the \emph{Lie Derivative} of $h$ along $f$ is denoted $L_f h(x,u) \colon = \deriv{}{x}h(x) \cdot f(x,u)$. Given a normed space $V$, $\mc{P}(V)$ denotes the set of all subsets of $V$. Given a subset $S \subset V$, $\overline{co}S$ denotes the convex closure of $S$ in V. 

For compactness of notation and exposition, throughout the paper all of the control systems we consider will have states belonging to $\R^n$ and admissible inputs taking values in $\R^m$, where $n$ and $m$ are positive integers. Moreover, we will assume that the admissible set of initial conditions for these control systems belong to a set $D  \subset \R^n$, and that the control supplied to these systems takes on values in the set $U \subset \R^m$. Both $D$ and $U$ are assumed to be compact, connected and convex. Throughout the paper we will consider square-integrable control signals of length $T>0$, which will be denoted $L^2([0,T],U)$. We will compactly denote the admissible data for these systems by
\begin{equation*}
\mc{X} = D \times L^2(\sp{0,T}, U).
\end{equation*}
We define the space of tangent vectors on $\mc{X}$ by
\begin{equation*}
\mc{X}' = \R^n \times L^2(\sp{0,T}, R^m).
\end{equation*}
Finally, we endow both $\mc{X}$ and $\mc{X}'$ with the norm ${|| \cdot || \colon \R^n \times L^2(\sp{0,T}, R^m) \to \R}$ defined for each $\xi = (x_0, u) \in \R^n \times L^2(\sp{0,T}, R^m)$ by
\begin{equation*}
\norm{\xi}  = || x_0||_2 + ||u||_2.
\end{equation*}
Given $\delta >0$ and $\xi \in \mc{X}$, $B^\delta(\xi)$ denotes the ball of radius $\delta$ centered at $\xi$.

\section{Piecewise Smooth Control Systems}
\label{sec:pws}
We now introduce the class of bimodal piecewise-smooth control systems considered in this paper, briefly review Filippov's convention for defining the dynamics of this class of systems, and introduce the smooth approximations we study throughout the paper. 
\subsection{Piecewise-smooth Control Systems}
To begin, let $g \colon \R^n \to \R$ be a regular, smooth map with Lipschitz continuous first and second partial derivatives. Consider the two disjoint domains
\begin{equation*}
{D_1 = \set{x \in \R^n \colon g(x) < 0}} 
\end{equation*}
and
 \begin{equation*}
 D_{2} = \set{x \in \R^n \colon g(x)>0},
 \end{equation*}
which are separated by the co-dimension-1 sub-manifold
 \begin{equation*}
 \Sigma = \set{x \in \R^n \colon g(x) = 0}.
 \end{equation*}
 
Let $f_1, f_2 \colon \R^n \times \R^m \to \R^n$ be smooth vector fields with Lipschitz continuous first and second partial derivatives, and then consider the piecewise-smooth control system
\begin{equation}\label{eq:pws_system}
\dot{x} = f(x,u) = \begin{cases}
f_1(x,u) & \text{ if } x \in D_1\\
f_2(x,u) & \text{ if } x \in D_2,
\end{cases}
\end{equation}
where $f \colon \R^n \times \R^m \to \R^n$. Note that $f$ is undefined along $\Sigma$, and in general may also be discontinuous along this surface. Due to the discontinuity in $f$, classical (or Caratheodory) solutions for the differential equation \eqref{eq:pws_system} may fail to exists along the surface of discontinuity. Thus, we turn to Filippov's convention to define the dynamics of the system. 
\subsection{Filippov Solutions}

We now review Filippovs convention, and refer the reader to \cite{filippov2013differential} for a more thorough introduction. Throughout the document we will make the following regularity assumption, which ensures the existence and uniqueness of Filippov solutions for the discontinuous system \eqref{eq:pws_system} \cite[Chapeter 2, Section 2, Theorem 2]{filippov2013differential}.

 \vspace{0.1cm}
\begin{assumption}\label{ass:transverse}
For each $(x,u) \in \Sigma \times \R^m$ either $L _{f_1}g(x,u)>0$ or $L_{f_2}g(x,u)<0$.
\end{assumption}
 \vspace{0.1cm}
The assumption rules out pathological cases where $L _{f_1}g(x,u)<0$ and $L_{f_2}g(x,u)>0$ along $\Sigma$, in which case admissible solutions may either enter $D_1$ or $D_2$. 

Filippov's convention compactly defines the dynamics of the discontinuous system by
\begin{equation}\label{eq:inclusion}
    \dot{x} \in F(x,u) =  \begin{cases}
    f_1(x, u) & \text{ if } x \in D_1\\
    \overline{co}\set{f_1(x,u),f(x,u)} & \text{ if } x \in \Sigma\\
    f_2(x,u) & \text{ if } x \in D_2,
    \end{cases}
\end{equation}
where $F \colon \R^m \times \R^n \to \mathcal{P}(\R^n)$ is a multivalued map \footnote{For a more formal derivation of this differential inclusion the reader is referred to \cite[Chapter 1, Section 2]{filippov2013differential}}. For each set of data $\xi = (x_0, u) \in \mc{X}$, we say that the absolutely continuous function $x^\xi \colon \sp{0,T} \to \R^n$ is the \emph{Filippov solution} corresponding to this data if $x^{\xi}(0) = x_0$ and it satisfies the differential inclusion \eqref{eq:inclusion} at almost everywhere. For each $t \in \sp{0,T}$ we then definine the map $\oldphi_t \colon \mc{X} \to \R^n$ by
\begin{equation}
  \oldphi_t(\xi) = x^{\xi}(t).  
\end{equation}

With Assumption \ref{ass:transverse} in effect, it is customary to distinguish the following two regions of $\Sigma \times U$:
\begin{enumerate}
    \item The \emph{crossing region}: 
    \begin{equation*}
        \set{(x,u) \in \Sigma \times U \colon (L_{f_1}g(x,u))\cdot (L_{f_2}g(x,u)) >0}
    \end{equation*}
    \item The \emph{sliding region}:
    \begin{equation*}
        \{(x,u) \in \Sigma \times U \colon L_{f_1}g(x,u) >0 \text{ and } \   L_{f_2}g(x,u) <0\}
    \end{equation*}
\end{enumerate}
When a Filippov solution is in the crossing region, it leaves the surface of discontinuity and enters either $D_1$ or $D_2$. However, when the solution is in the sliding region the solutions \emph{slides} along the surface of discontinuity 
and obeys the differential equation
\begin{equation}
    \dot{x} = f^s(x,u),
\end{equation}
 where Filippov's famous \emph{sliding vector field} is defined by
\begin{equation*}
f^s(x,u) = (1-\alpha(x,u))f_1(x,u) +\alpha(x,u) f_2(x,u)
\end{equation*}
where 
\begin{equation*}
\alpha(x,u) = \frac{\nabla g(x)\cdot f_1(x,u)}{\nabla g(x)\cdot (f_1(x,u) - f_2(x,u))}
\end{equation*}
selects the unique convex combination of $f_1$ and $f_2$ which keeps the trajectory on $\Sigma$.

\subsection{Smooth Approximations}
\label{sec:smoothed}
Our family of smooth approximations are parameterized by $\epsilon >0$ and are obtained by smoothing \eqref{eq:pws_system} along the \emph{region of regulatization}
\begin{equation*}
\Sigma^\epsilon = \set{x \in \R^n \colon -\epsilon < g(x) < \epsilon}.
\end{equation*}
The main idea behind the smoothing approach is to use the following class of functions to transition between $f_1$ and $f_2$ along $\Sigma^\epsilon$.
 \vspace{0.1cm}
\begin{definition} 
We say that $\phi \in C^{\infty}(\R, [0,1])$ is a \emph{transition function} if 1) $\phi(a) =0$ if $a \leq -1$,
 2) $\phi(a)= 1$ if $a \geq 1$, 3) $\phi$ is strictly increasing on $(-1,1)$, and 4) each of the first and second partial derivatives of $\phi$ are Lipschitz continuous.
\end{definition}
 \vspace{0.1cm}
For the rest of the paper, we assume that as single transition function $\phi$ has been selected. For each $\epsilon>0$ we then define the $\epsilon$-relaxation of \eqref{eq:pws_system} to be
\begin{equation}\label{eq:smooth}
    \dot{x} = f^\epsilon(x,u)
\end{equation}
where $f^\epsilon \colon \R^n \times \R^m \to \R^n$ is given by
\begin{equation*}\label{eq:smooth_system}
f^{\epsilon}(x,u) = \p{1 - \phi\p{\frac{g(x)}{\epsilon}}} f_1(x,u) + \phi\p{\frac{g(x)}{\epsilon}}f_2(x,u).
\end{equation*}
Note that $f^\epsilon(x,u)$ is a convex combination of $f_1(x,u)$ and $f_2(x,u)$ if $x \in \Sigma^\epsilon$, and that $f^{\epsilon}(x,u) = f_1(x,u)$ if $x \in D_1 \setminus \Sigma^\epsilon$ and $f^\epsilon(x,u) = f_2(x,u)$ if $x \in D_2 \setminus \Sigma^\epsilon$. 

It is straightforward to see that for each $\epsilon >0$ the vector field $f^\epsilon$ is smooth and Lipschitz continuous, and thus the trajectories corresponding to the smooth system will be unique. Thus for each $\epsilon>0$ and $\xi =(x_0,u) \in \mc{X}$ let $x^{(\epsilon, \xi)} \colon \sp{0,T} \to \R^n$ be the solution to \eqref{eq:smooth} with initial condition $x^{(\xi,\epsilon)}(0) = x_0$ and for each $t \in \sp{0,T}$ define $\oldphi_t^\epsilon \colon \mc{X} \to \R^n$ by 
\begin{equation*}
\oldphi_t^\epsilon(\xi) = x^{(\epsilon, \xi)}(t).
\end{equation*}

The following result from \cite{westenbroek2018new} establishes that the trajectories of the relaxed system converge to the Filippov solutions of the discontinuous system as $\epsilon \to 0$ when Assumption \ref{ass:transverse} holds. 
  \vspace{0.1cm}

\begin{lemma}(\cite{westenbroek2018new})\label{lemma:convergence1}
Let Assumption \ref{ass:transverse} hold for \eqref{eq:pws_system}. Then there exists $C > 0$ and $\epsilon_0 > 0$ such that  for each $\epsilon \leq \epsilon_0$, $t \in \sp{0,T}$ and $\xi =(x_0,u) \in \mc{X}$
\begin{equation*}
\norm{\oldphi_t(\xi) - \oldphi^\epsilon_t(\xi)} \leq C \epsilon.
\end{equation*}
\end{lemma}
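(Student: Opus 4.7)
The plan is to analyze the behavior of $\oldphi_t(\xi)$ and $\oldphi^\epsilon_t(\xi)$ locally in time by distinguishing three regimes for the Filippov trajectory: (i) strict interior of $D_1$ or $D_2$; (ii) transverse crossings of $\Sigma$; and (iii) sliding along $\Sigma$. I would argue that each regime contributes an $O(\epsilon)$ position-error budget with uniform constants, so that a Gronwall-style accumulation over $\sp{0,T}$ yields the claimed bound. Boundedness of the number of regimes per trajectory (uniformly in $\xi$) follows from the compactness of $\sp{0,T}$, $D$, and $U$ together with Assumption \ref{ass:transverse}, which prevents the degenerate tangencies that would allow chattering.

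On a regime of type (i), say $\oldphi_\cdot(\xi) \in \mathrm{int}(D_1)$ on a closed sub-interval, one has $g(\oldphi_t(\xi)) \leq -c$ for some $c > 0$ by continuity and compactness; for $\epsilon < c/2$ the smoothed trajectory (within $O(\epsilon)$ by the inductive error from the previous regime) also remains outside $\Sigma^\epsilon$, where $f^\epsilon = f_1$ exactly, so a Gronwall estimate with the Lipschitz constant of $f_1$ propagates the existing $O(\epsilon)$ error with only a multiplicative growth factor. For a transverse crossing of type (ii), both trajectories traverse $\Sigma^\epsilon$ in time $O(\epsilon)$, since $\absn{L_{f_i}g}$ is bounded away from zero by Assumption \ref{ass:transverse} and $\Sigma^\epsilon$ has width $2\epsilon$; because $f_1$, $f_2$ and $f^\epsilon$ are uniformly bounded, the position error accumulated during the transit is $O(\epsilon)$.

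The main technical work is the sliding phase. I would introduce the fast coordinate $z = g(x)/\epsilon$, so that the smoothed system becomes a singularly perturbed control system: the $x$-dynamics are $O(1)$, while $\dot z = \epsilon^{-1}\sp{(1 - \phi(z)) L_{f_1}g(x,u) + \phi(z) L_{f_2}g(x,u)}$. On the sliding region $L_{f_1}g > 0 > L_{f_2}g$, so the bracketed expression is smooth in $z$, strictly decreases as $\phi$ sweeps from $0$ to $1$, and therefore vanishes at a unique $z^*(x,u)$ satisfying $\phi(z^*) = \alpha(x,u)$, the Filippov weight. This exhibits an attracting slow manifold on which the reduced $x$-dynamics are exactly $f^s$. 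Since $\phi'(z^*)\sp{L_{f_2}g - L_{f_1}g} < 0$, $z$ contracts toward $z^*$ exponentially with rate $O(1/\epsilon)$; a Tikhonov--Fenichel singular-perturbation argument then shows the smoothed trajectory is drawn to the slow manifold on a time scale $O(\epsilon)$ (contributing only $O(\epsilon)$ to the position error, since $\dot x$ is bounded), and subsequently tracks the sliding solution within $O(\epsilon)$ for the remainder of the phase.

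Combining the regimes yields $\norm{\oldphi_t(\xi) - \oldphi^\epsilon_t(\xi)} \leq C\epsilon$ uniformly in $\xi \in \mc X$. The main obstacle is the sliding estimate: one must handle entry into and exit from sliding carefully, so that the boundary-layer approximation glues to the interior and crossing estimates without losing the $O(\epsilon)$ factor, and must verify that the Lipschitz constants, the lower bounds on $\absn{L_{f_i}g}$ in the transversal region, and the slow-manifold contraction rate can all be chosen uniformly in $(x_0, u) \in \mc X$. This uniformity is precisely why compactness of $D$ and $U$ and the Lipschitz hypotheses on the first two derivatives of $f_1$, $f_2$, and $g$ are imposed.
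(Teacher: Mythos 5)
The paper does not prove Lemma \ref{lemma:convergence1} at all: it is imported verbatim from \cite{westenbroek2018new}, so there is no in-document proof to compare against. Your regularization-as-singular-perturbation strategy (fast variable $z=g(x)/\epsilon$, slow manifold $\phi(z^*)=\alpha(x,u)$ reproducing the sliding field, boundary-layer contraction at rate $O(1/\epsilon)$) is indeed the standard route in the Teixeira/Llibre--Sotomayor line of work that this lemma descends from, and the interior and transverse-crossing estimates you sketch are fine.

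There is, however, a genuine gap in how you obtain uniformity over $\xi\in\mc{X}$. You assert that the number of regimes per trajectory is bounded uniformly in $\xi$ because compactness plus Assumption \ref{ass:transverse} ``prevents the degenerate tangencies that would allow chattering.'' Assumption \ref{ass:transverse} does no such thing: it only excludes the repulsive case ($L_{f_1}g<0$ and $L_{f_2}g>0$ simultaneously), and is perfectly compatible with tangencies (e.g.\ $L_{f_1}g=0$ with $L_{f_2}g<0$) and with trajectories that cross $\Sigma$ arbitrarily many times --- with $u$ an arbitrary element of $L^2([0,T],U)$ one can drive the state back and forth across a crossing region as often as one likes. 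The paper itself tacitly concedes this by introducing Assumptions \ref{ass:diff1}--\ref{ass:diff5} (transversal arrival, finitely many arrival times) later, for a \emph{single nominal} trajectory, precisely because such behavior is not excluded by Assumption \ref{ass:transverse} alone. Since your error budget is $O(\epsilon)$ per regime with a multiplicative Gronwall factor per interior segment, an unbounded (or merely $\xi$-dependent) regime count destroys the uniform constant $C$; the argument needs to be reorganized so that the estimate does not count switches at all (e.g.\ a single differential inequality for the error valid across regime boundaries, or viewing the regularized trajectory as a solution of an $O(\epsilon)$-inflated inclusion and using a continuous-dependence result for \eqref{eq:inclusion}). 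A second, smaller issue: Tikhonov--Fenichel theory is stated for smooth (at least continuous-in-time) systems, whereas here the quasi-steady state $z^*(x,u(t))$ jumps with the merely measurable control; you would need to replace the off-the-shelf invocation by a direct contraction/Lyapunov estimate on $g(x)/\epsilon$ that is insensitive to the time-regularity of $u$.
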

 \vspace{0.2cm}

In other words, the solutions to the regularized systems converge to the solutions of the non-smooth system at a rate that is linear in $\epsilon$.

\section{Optimal Control Problems}
\label{sec:ocp}
In this section we formulate an optimal control problem over the piecewise-smooth system introduced in Section \ref{sec:pws}, and a family of approximate problems formulated over the smooth regularizations introduced in Section \ref{sec:smoothed}. We then discuss the challenges associated with applying standard gradient-based algorithms to arrive at minimizers for these problems. 

\subsection{Optimal Control Problems}
 To begin, let $\ell \colon \R^n \to \R$ be a cost function which is assumed to have Lipschitz continuous first and second partial derivatives, and define the cost functional $L \colon \mc{X} \to \R$ by
\begin{equation*}
L (\xi) = \ell(\oldphi_T(\xi)).
\end{equation*}
We then define the following optimal control problem:

\begin{equation*}
\mathbf{(P)} \quad \inf_{\xi \in \mc{X}} L(\xi)
\end{equation*}
Note that this problem formulation is quite general, since there exists well-known transformations to include additional terms, such as running costs, into the terminal cost functional we consider here \cite[Chapter 5]{polak2012optimization}.

Next, for each $\epsilon >0$ we define the regularized cost functional $L^\epsilon \colon \mc{X} \to \R$ by
\begin{equation}
L^\epsilon(\xi) = \ell(\oldphi_T^\epsilon(\xi))
\end{equation}
and subsequently define the $\epsilon-$relaxation of $P$  by: 
\begin{equation*}
\mathbf{(P^\epsilon)} \quad \inf_{\xi \in \mc{X}} L^\epsilon(\xi)
\end{equation*}
We next discuss derivative based approaches for solving these optimization problems. 
\subsection{Directional Derivatives}
The majority of practical tools for finding local minimizers of optimal control problems rely on calculating the derivatives of the cost functional with respect to changes in the initial condition an inputs applied to the system. Once these derivatives are obtained, standard optimization procedures can be applied to iteratively improve performance \cite{polak2012optimization}.
\subsubsection{Directional Derivatives of Regularized Problems}
 Since for each $\epsilon >0$ the vector field $f^\epsilon$ is smooth, we can compute directional derivatives of $L^\epsilon$ using standard approaches. In particular, given $\xi = (x_0, u) \in \mc{X}$, we define $DL^\epsilon(\xi ; \cdot) \colon \mc{X}' \to \R$ by
 \begin{equation}
     DL^\epsilon(\xi; \delta \xi ) = \lim_{\lambda \downarrow 0} \frac{L^\epsilon(\xi + \lambda \cdot \delta \xi) - L^\epsilon(\xi)}{\lambda}
 \end{equation}
 for each direction $\delta \xi = (\delta x_0, \delta u) \in \mc{X}'$, which by \cite[Theorem 5.6.8]{polak2012optimization} is given by
 \begin{equation*}
     DL^\epsilon(\xi ; \delta \xi) = p(0)^T \cdot \delta x_0 + \int_{0}^T p(\tau)^T B(\tau) \delta u(\tau) d\tau
 \end{equation*}
 where $p \colon \sp{0,T} \to \R^n$ is the solution to the \emph{co-state} or \emph{adjoint} equations
\begin{equation*}
- \dot{p}(t) = A(t)^T p(t) \text{ a.e. } t \in \sp{0,T}, \  p(T)^T = \deriv{}{x}l(\oldphi_T(\xi))     
\end{equation*}
 and for each $t \in \sp{0,T}$ we have $A(t) = \pderiv{}{x}f^{\epsilon}(\oldphi_t(\xi), u(t))$ and $B(t) =\pderiv{}{u}f^\epsilon(x,u)$. In particular $DL^\epsilon(\xi; \delta \xi)$ provides a first-order approximation to how $L^\epsilon$ changes by perturbing the data supplied to the system in the direction $\delta \xi$.

 However, inspecting $f^\epsilon$, we see that the partial derivative $\pderiv{}{x}f^\epsilon(x,u)$ will be of the order $\frac{1}{\epsilon}$ when $x \in \Sigma^\epsilon$. Thus, we should be concerned that the adjoint equations associated to the regularized system will "blow up" as we take $\epsilon \to 0$ along trajectories which pass through the region of regularization. In other words, in the numerical setting we should be concerned that discretizations of $P^\epsilon$ will become ill-conditioned for small values of the regularization parameter. However, our subsequent analysis will demonstrate that this is not the case, and that the gradients of $P^\epsilon$ remain bounded even as we take $\epsilon \to 0$.

 \subsubsection{Directional Derivatives of Non-smooth Problem}
It is well-known that Filippov solutions for the discontinuous system \eqref{eq:pws_system} may have a non-smooth dependence on the initial conditions and inputs supplied to the system \cite[Chapter 2, Section 11]{filippov2013differential}. In particular, the map $\oldphi_t(\cdot)$ may fail to be differentiable at the point $\xi \in \mc{X}$ if the trajectory associated to this data arrives at the surface of discontinuity at time $t$. In the context of optimal control, this means that the adjoint equations associated to the discontinuous system may under go discrete "jumps" at time instances when the nominal trajectory reaches the surface of discontinuity\footnote{The "jump conditions" associated with the adjoint equations of the discontinuous system have a rich geometric interpretation which we do not have space to discuss in this article. However, the interested reader is referred to \cite[Chapter 2, Section 11]{filippov2013differential}) for a comprehensive discussion regarding these "jump conditions" in the context of sensitivity analysis.}. Due to these discontinuities, the adjoint equations, and consequently the gradients of $L$, are difficult to approximate numerically using standard integration techniques such as Euler integration \cite{stewart2010optimal}. As discussed in Section \ref{sec:intro}, this has typically meant that derivative-based methods for solving optimal control problems over discontinuous systems have required restrictive practices such as fixing the number of times the trajectory crosses the surface of discontinuity \emph{a priori}. This is our primary motivation for employing the smooth approximations considered in the paper.

Before proving our convergence results in Section \ref{sec:convergence}, we first formalize conditions which will ensure the differentiability of $L$ at a nominal choice of $\xi =(x_0, u) \in \mc{X}$. Each of these additional assumptions are standard when discussing the differentiability of Filippov solutions \cite[Chapter 2, Section ll]{filippov2013differential}. For compactness of notation, given $\xi =(x_0,u) \in \mc{X}$, we define $\mc{T}^{\xi}$ to be the set of time instances at which the Filippov solution corresponding to this data arrives at the surface of discontinuity:
\begin{equation*}
\mc{T}^{\xi}  = \{\hat{t} \in (0,T] \colon \oldphi_{\hat{t}}(\xi) \in \Sigma \text{ and } \oldphi_{\hat{t}^{-}}(\xi) \not \in \Sigma\} 
\end{equation*}

First, we assume that the nominal trajectory does not begin on the surface of discontinuity or arrive at it at exactly time $T$. The first condition can lead to non-differentiability with respect to initial conditions, while the later condition can cause $\oldphi_T(\cdot)$ to be non-differentiable, and consequently also the terminal cost functional $L$ 
\cite[Chapter 2, Section ll]{filippov2013differential}.
\vspace{0.2cm}
\begin{assumption}\label{ass:diff1}
The data $\xi  = (x_0, u) \in \mc{X}$ is such that $x_0 \not \in \Sigma$ and $T \not \in \mc{T}^{\xi}$.
\end{assumption}
\vspace{0.2cm}
 Our next assumption ensures that the nominal trajectory arrives at the surface of discontinuity transversely, without "skimming" the surface, a situation which is well known to cause non-differentiability. 
\vspace{0.2cm}
 \begin{assumption}\label{ass:diff3}
 The data $\xi = (x_0, u) \in \mc{X}$ is such that if $\hat{t} \in \mc{T}^\xi$ then $\exists \gamma >0$ and interval $I =(\hat{t} - \gamma, \hat{t} + \gamma) \cap \sp{0,T}$ such that

 \begin{enumerate}
 \item $\oldphi_{\hat{t}^-}(\xi) \in D_1 \implies$ $L_{f_1}g(\oldphi_t(\xi), u(t))>0$,  $ \forall t \in I$
 \item $\oldphi_{\hat{t}^-}(\xi) \in D_2 \implies$  $L_{f_2}g(\oldphi_t(\xi), u(t)) <0$, $ \forall t \in I$ \end{enumerate}
 \end{assumption}
 \vspace{0.2cm}
 The final assumption we make rules our pathological cases such as "Zeno" behavior, where the trajectories of the system cross the surface of discontinuity an infinite number of times. 
 
\begin{assumption}\label{ass:diff5}
The data $\xi = (x_0, u) \in \mc{X}$ is such that $\mc{T}^{\xi}$ is a finite set. 
\end{assumption}

When the data $\xi \in \mc{X}$ satisfies Assumptions \ref{ass:diff1}-\ref{ass:diff5}, the map $\oldphi_T(\cdot)$ will be differentiable at $\xi$, and thus the directional derivatives of $L$ will be well-defined at this point. When this is the case we define $DL(\xi ; \cdot)\colon \mc{X}' \to \R$ by
\begin{equation}
DL(\xi ; \delta \xi ) = \lim_{\lambda \downarrow 0} \frac{L(\xi + \lambda \cdot \delta \xi) - L(\xi)}{\lambda},
\end{equation}
for each each direction $\delta \xi \in \mc{X}$.

\section{Convergence Results}
\label{sec:convergence}
We are now ready to present the theoretical contribution of this work. In section \ref{sub:convergence} we demonstrate conditions under which the directional derivatives of the regularized problem converge to directional derivatives on the discontinuous problem, and in Section \ref{sub:minimizers} we use this result to discuss how the regularized problems can be used to consistently approximate the non-smooth problem. 

\subsection{Convergence of Directional Derivatives}\label{sub:convergence}
We first demonstrate that when Assumptions \ref{ass:diff1}-\ref{ass:diff5} are satisfied, and directional derivatives on the non-smooth cost functional are well-defined, that that the regularized problems can be used to approximate these derivatives with arbitrary precision. We first develop a few intermediate lemmas which are used in the proof of the main result. Proofs for the lemmas can be found in the appendix. The first lemma demonstrates that variations on the relaxed problems remain bounded, even as $\epsilon \to 0$. 
 
\begin{lemma}\label{lemma:bounded}
Let Assumption \ref{ass:transverse} hold. Further, assume that the data $\xi = (x_0, u) \in \mc{X}$ satisfies Assumptions \ref{ass:diff1}-\ref{ass:diff5} hold. Then there exists $C>0$ and $\epsilon_0>0$ such that for each $\epsilon \leq \epsilon_0$ and $\delta \xi = (\delta x_0, \delta u)\in \mc{X}$
\begin{equation}
\norm{DL^{\epsilon}(\xi;  \delta \xi) - DL^\epsilon(\xi ; \delta \xi)} \leq C \norm{\delta \xi}. 
\end{equation}
\end{lemma}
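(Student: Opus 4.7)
Read together with the preceding sentence (``variations on the relaxed problems remain bounded, even as $\epsilon \to 0$''), the intended statement is most naturally the uniform bound
\begin{equation*}
|DL^\epsilon(\xi;\delta\xi)| \le C\,\norm{\delta\xi} \qquad \forall\, \epsilon \le \epsilon_0,\ \delta\xi \in \mc{X}',
\end{equation*}
with $C$ independent of $\epsilon$, and I sketch a plan for this uniform estimate. By the adjoint formula given in the paper, and using the uniform boundedness of $\deriv{}{x}\ell$ and of $\pderiv{}{u}f^\epsilon = (1-\phi)\pderiv{}{u}f_1 + \phi\,\pderiv{}{u}f_2$, it suffices to prove $\norm{p^\epsilon}_{L^\infty}\le C$ uniformly in $\epsilon$, or equivalently to bound the fundamental matrix of the variational ODE $\dot z = A^\epsilon(t)z + B^\epsilon(t)\delta u$ with $A^\epsilon(t) = \pderiv{}{x}f^\epsilon(\oldphi_t^\epsilon(\xi),u(t))$.

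The obstacle is that $\norm{A^\epsilon(t)}$ is of order $1/\epsilon$ inside $\Sigma^\epsilon$, so a naive Gr\"onwall estimate gives the useless bound $\exp(CT/\epsilon)$. I would exploit the rank-one structure of the singular part: direct computation yields
\begin{equation*}
A^\epsilon(t) = A_b^\epsilon(t) + \frac{1}{\epsilon}\,\phi'\!\left(\tfrac{g(x)}{\epsilon}\right)(f_2 - f_1)(x,u)\,\nabla g(x)^T,
\end{equation*}
where $A_b^\epsilon(t) = (1-\phi)\pderiv{f_1}{x} + \phi\,\pderiv{f_2}{x}$ is bounded uniformly in $\epsilon$. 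The rank-one term acts on $z$ only through the scalar $\delta\sigma(t) := \nabla g(\oldphi_t^\epsilon(\xi))^T z(t)$; differentiating $\delta\sigma$ along $\dot z$ yields
\begin{equation*}
\dot{\delta\sigma}(t) = c^\epsilon(t)\,\delta\sigma(t) + R(t), \qquad c^\epsilon(t) = \frac{1}{\epsilon}\,\phi'\!\left(\tfrac{g}{\epsilon}\right)\nabla g^T(f_2 - f_1),
\end{equation*}
with $R(t)$ collecting the contributions from $\nabla g^T A_b^\epsilon z$, $\nabla g^T B^\epsilon \delta u$, and $z^T(\nabla^2 g)f^\epsilon$, each of which is bounded uniformly in $\epsilon$.

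The geometric assumptions let me control $c^\epsilon$ on each piece. By Assumption \ref{ass:diff5} the Filippov trajectory touches $\Sigma$ at finitely many times, and by Lemma \ref{lemma:convergence1} the $\epsilon$-trajectory shadows it to within $O(\epsilon)$; hence its visits to $\Sigma^\epsilon$ split into finitely many disjoint intervals of two types. On a transverse \emph{crossing} window (Assumption \ref{ass:diff3}), $|\dot g|$ is bounded below, so the transit time through $\Sigma^\epsilon$ is $O(\epsilon)$ and $\int c^\epsilon\,dt = O(1)$ even though the integrand is of order $1/\epsilon$. On a \emph{sliding} window, the Filippov sliding condition $L_{f_1}g > 0 > L_{f_2}g$ forces $\nabla g^T(f_2 - f_1) = L_{f_2}g - L_{f_1}g$ to be strictly negative and bounded away from zero, so $c^\epsilon(t) \le -c_0/\epsilon$; the $\delta\sigma$ ODE becomes strongly contractive and keeps $|\delta\sigma(t)| = O(\norm{R}_\infty)$ after a boundary-layer transient of length $O(\epsilon)$.

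Patching these estimates across the finitely many segments (Assumption \ref{ass:diff1} keeps the endpoints safely off $\Sigma$) yields $\sup_t |\delta\sigma(t)| \le C\,\norm{\delta\xi}$ uniformly in $\epsilon$. The rank-one contribution to $A^\epsilon z$ is then uniformly bounded, and a standard Gr\"onwall on $A_b^\epsilon$ alone closes the argument, giving $\norm{z}_{L^\infty}\le C\,\norm{\delta\xi}$ and hence the desired bound on $DL^\epsilon$. The main obstacle I anticipate is the sliding regime: without exploiting the sign of $\nabla g^T(f_2 - f_1)$ --- the geometric content of Filippov's sliding condition --- the blow-up is genuine, so the technical work concentrates on converting the qualitative contraction into a clean quantitative $L^\infty$ estimate on $\delta\sigma$ that correctly accounts for the fast boundary-layer transient.
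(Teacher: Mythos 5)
Your reading of the mistyped statement is the right one, and your overall architecture coincides with the paper's: the paper likewise reduces the claim to bounding the variational (sensitivity) equations, writes $\pderiv{}{x}f^\epsilon$ as a bounded convex-combination part plus the rank-one singular term $\frac{1}{\epsilon}\phi'\p{g/\epsilon}(f_2-f_1)\nabla g^T$, straightens coordinates so that $g(x)=x_1$ (so your scalar $\delta\sigma$ is literally the first component $\delta x_1^\epsilon$ of the variation), disposes of transverse crossings via the $O(\epsilon)$ transit time through $\Sigma^\epsilon$, and in the sliding regime exploits $\nabla g^T(f_2-f_1)\le -\hat f<0$ to obtain damping of the normal component at rate $1/\epsilon$. (The paper also treats two borderline situations permitted by Assumption \ref{ass:diff3} --- dwelling on $\Sigma$ without sliding, and instantaneous but non-transverse crossing --- by the same damping mechanism; your two-type classification of the visits to $\Sigma^\epsilon$ omits these, though that is a minor point.)

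The genuine gap is where you declare the sliding case closed. First, the contraction estimate gives $|\delta\sigma(t)|\lesssim e^{-c_0(t-t_0)/\epsilon}|\delta\sigma(t_0)|+\frac{\epsilon}{c_0}\normn{R}_\infty$; you need that factor of $\epsilon$, not merely $|\delta\sigma|=O(\normn{R}_\infty)$, for the $1/\epsilon$ in the rank-one term to be neutralized. Second, and more seriously, $R$ contains $\nabla g^T A_b^\epsilon z$, so it is of size $\normn{z}$, while the tangential rows of the rank-one term force $z$ at rate $\frac{1}{\epsilon}|\delta\sigma|$ during the boundary layer, and $\delta\sigma$ is in turn forced by the tangential components. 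Because of this two-way coupling a single pass of your estimate does not close: the paper's version of exactly this computation yields a bound of the form $\normn{\delta x_2^\epsilon}\lesssim 1+\epsilon^{\,1-C_2M/(k_2\hat f)}$ on the interval of length $\frac{\epsilon}{k_2\hat f}\ln(1/\epsilon)$ required for the normal variation to decay to $O(\epsilon)$, and this diverges whenever the exponent is negative. The bulk of the paper's proof is an inductive bootstrap --- re-substituting each improved bound $p$ times, with $p>C_2M/(k_2\hat f)$, so that the offending factor becomes $\epsilon^{\,p-C_2M/(k_2\hat f)}$ --- which is precisely the ``technical work'' you flag but do not carry out; ``a standard Gr\"onwall on $A_b^\epsilon$ alone'' does not suffice, because the constant generated by the transient is not a priori small. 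In short: same approach and correct identification of the mechanism, but the quantitative closure of the coupled normal/tangential system in the sliding regime, which constitutes essentially all of the paper's proof, is missing from your argument.
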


Next, we establish our final two preliminary results needed for the statement of our firs convergence Theorem. The next result demonstrates conditions under which directional derivatives on the regularized problems converge to a well defined limit as $\epsilon \to 0$, and will provide the rate of convergence in Theorem \ref{thm:convergence1}. We remind the reader the the proof of the following two results are contained in the appendix.

\begin{lemma}\label{lemma:Lipschitz}
Let Assumption \ref{ass:transverse} hold. Let the data $\xi \in \mc{X}$ satisfy assumptions \ref{ass:diff1}-\ref{ass:diff3}. Then there exists $C>0$ and $\epsilon_0 >0$ such that for each $\epsilon_1, \epsilon_2 \leq \epsilon_0$ and $\delta \xi \in \mc{X}$ we have
\begin{equation}\tag{73}
\norm{DL^{\epsilon_1}(\xi ;  \delta \xi) - DL^{\epsilon_2}(\xi ;  \delta \xi)} \leq C|\epsilon_1 -\epsilon_2|.
\end{equation}
\end{lemma}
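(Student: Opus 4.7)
The plan is to use the explicit representation $DL^\epsilon(\xi;\delta\xi) = \nabla\ell(\oldphi_T^\epsilon(\xi))^T \delta x^\epsilon(T)$, where $\delta x^\epsilon$ solves the variational equation $\dot{\delta x^\epsilon}(t) = A^\epsilon(t)\delta x^\epsilon(t) + B^\epsilon(t)\delta u(t)$ with $\delta x^\epsilon(0) = \delta x_0$, and to reduce the target bound to two Lipschitz-in-$\epsilon$ estimates: (i) $\|\oldphi_T^{\epsilon_1}(\xi) - \oldphi_T^{\epsilon_2}(\xi)\| \leq C|\epsilon_1 - \epsilon_2|$, and (ii) $\|\delta x^{\epsilon_1}(T) - \delta x^{\epsilon_2}(T)\| \leq C|\epsilon_1 - \epsilon_2|\|\delta\xi\|$. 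With (i)--(ii) in hand, splitting $DL^{\epsilon_1} - DL^{\epsilon_2} = [\nabla\ell(\oldphi_T^{\epsilon_1}(\xi)) - \nabla\ell(\oldphi_T^{\epsilon_2}(\xi))]^T \delta x^{\epsilon_1}(T) + \nabla\ell(\oldphi_T^{\epsilon_2}(\xi))^T[\delta x^{\epsilon_1}(T) - \delta x^{\epsilon_2}(T)]$ and combining Lipschitz continuity of $\nabla\ell$ with the uniform bound $\|\delta x^\epsilon(T)\| \leq C\|\delta\xi\|$ from Lemma~\ref{lemma:bounded} yields the claim.

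For (i), I fix $\epsilon_1 \leq \epsilon_2$ and observe that outside $\Sigma^{\epsilon_2}$ the vector fields $f^{\epsilon_1}$ and $f^{\epsilon_2}$ coincide. Inside $\Sigma^{\epsilon_1}$, Lipschitzness of $\phi$ gives $\|f^{\epsilon_1}(x,u) - f^{\epsilon_2}(x,u)\| \leq C|\epsilon_1 - \epsilon_2|/\epsilon_2$, while in the ``annular'' region $\Sigma^{\epsilon_2}\setminus\Sigma^{\epsilon_1}$ the pointwise difference is only $O(1)$. By transversality (Assumption~\ref{ass:diff3}) the trajectory passes through this annulus in total time $O(|\epsilon_1 - \epsilon_2|)$ per arrival in $\mc{T}^\xi$, while the stay within $\Sigma^{\epsilon_1}$ lasts time $O(\epsilon_1)$; Assumption~\ref{ass:diff5} bounds the number of arrivals. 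Multiplying times by pointwise bounds and invoking Gronwall's inequality, with a uniform-in-$\epsilon$ integral Lipschitz bound for $f^{\epsilon_1}$ analogous to the one underlying Lemma~\ref{lemma:bounded}, delivers (i).

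For (ii), I plan to run the analogous strategy on the variational equation. The delicate object is the rank-one piece of $A^\epsilon$, namely $(1/\epsilon)\phi'(g/\epsilon)\,\nabla g\,(f_2 - f_1)^T$, whose pointwise norm is $O(1/\epsilon)$. The key trick is a change of integration variable from $t$ to $s = g(\oldphi_t^\epsilon(\xi))/\epsilon$ in a neighborhood of each arrival time $\hat{t}\in\mc{T}^\xi$: by Assumption~\ref{ass:diff3}, $\dot g = L_{f^\epsilon}g$ is bounded away from zero there uniformly in small $\epsilon$, so the map is a smooth diffeomorphism on an $O(\epsilon)$-wide time window and $(1/\epsilon)\phi'(g/\epsilon)\,dt = \phi'(s)\,ds / L_{f^\epsilon}g$, a uniformly bounded integrand. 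After the rescaling, the remaining factors depend on $\epsilon$ through the rescaled trajectory $\oldphi_{t(s,\epsilon)}^\epsilon$ (Lipschitz in $\epsilon$ by (i)), through $1/L_{f^\epsilon}g$ (smooth in $\epsilon$ via $f^\epsilon$), and through the entry/exit times of $\Sigma^\epsilon$ (also Lipschitz via the implicit function theorem applied to the level-set crossing). Paired with the bounded test quantity $\delta x^{\epsilon_2}(t)$, this gives $\int_0^T \|A^{\epsilon_1}(t) - A^{\epsilon_2}(t)\|\,\|\delta x^{\epsilon_2}(t)\|\,dt \leq C|\epsilon_1 - \epsilon_2|\|\delta\xi\|$, and an analogous bound for $B^{\epsilon_1} - B^{\epsilon_2}$. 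Applying Gronwall to the ODE for $\delta x^{\epsilon_1} - \delta x^{\epsilon_2}$, using the uniform-in-$\epsilon$ bound $\int_0^T \|A^{\epsilon_1}(t)\|\,dt \leq C$ that underlies Lemma~\ref{lemma:bounded}, yields (ii).

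The principal obstacle is establishing that the integrated singular term in (ii) is \emph{Lipschitz}, rather than merely continuous, in $\epsilon$: the integrand $(1/\epsilon)\phi'(g/\epsilon)$ blows up pointwise as $\epsilon \downarrow 0$, yet its integrated effect must depend Lipschitz-continuously on $\epsilon$. The change of variables $s = g(x)/\epsilon$ is what converts that pointwise blowup into a uniformly bounded integrand; thereafter, standard smooth-perturbation arguments for ODEs close the estimate, provided the $\epsilon$-dependence of the entry and exit times of $\Sigma^\epsilon$ is carefully tracked using the transversality hypothesis.
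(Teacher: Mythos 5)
Your reduction to (i) trajectory and (ii) variational-equation Lipschitz estimates is sensible, and your treatment of the transversal-crossing case (pointwise difference $O(|\epsilon_1-\epsilon_2|/\epsilon_2)$ inside the thinner layer, $O(1)$ difference on the annulus traversed in time $O(|\epsilon_1-\epsilon_2|)$, then Gronwall) is consistent with how the paper handles that regime. But there is a genuine gap: both of your steps silently assume that the regularized trajectory spends only $O(\epsilon)$ time in $\Sigma^\epsilon$, and that $L_{f^\epsilon}g$ is bounded away from zero along it. Assumptions \ref{ass:diff1}--\ref{ass:diff3} do \emph{not} exclude sliding: when the solution arrives with $L_{f_1}g>0$ and $L_{f_2}g<0$ it enters the sliding region, the regularized trajectory tracks a slow manifold inside $\Sigma^\epsilon$ for an $O(1)$ duration, and along it $L_{f^\epsilon}g\approx 0$ by construction (that is precisely what sliding means). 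In that regime your estimate (i) degenerates: the pointwise bound $C|\epsilon_1-\epsilon_2|/\epsilon_2$ integrated over an $O(1)$ residence time gives only $O(|\epsilon_1-\epsilon_2|/\epsilon_2)$, which is not $O(|\epsilon_1-\epsilon_2|)$ uniformly in $\epsilon_2$. Likewise in (ii) your key device --- the change of variables $s=g(\oldphi_t^\epsilon(\xi))/\epsilon$, justified by ``$L_{f^\epsilon}g$ bounded away from zero uniformly'' --- fails outright on the sliding (and tangential-crossing) portions, so the singular term $\frac{1}{\epsilon}\phi'(g/\epsilon)\nabla g\,(f_2-f_1)^T$ is not tamed there by your argument. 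The same objection applies to the non-transversal and ``stays on $\Sigma$ without sliding'' cases that the paper's Lemma \ref{lemma:bounded} proof treats separately.

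What is missing is the structural mechanism the paper leans on in exactly these regimes: inside the layer the normal component of the variation is exponentially contracted at rate $O(1/\epsilon)$ (because $\bar f_1^\epsilon(t)<-\hat f<0$ there), and it is this contraction --- extracted in the paper via a fast time scale $\tau=(t-t^\epsilon)/\epsilon$ and the bootstrapped bounds of the Lemma \ref{lemma:bounded} proof --- that keeps the $O(1)$-long layer dynamics from accumulating an $O(|\epsilon_1-\epsilon_2|/\epsilon_2)$ error. (For what it is worth, the paper's own proof of this lemma is truncated in the source after setting up that fast time scale, so only its intended strategy is visible; but your proposal as written proves the lemma only in the purely transversal case and would need the contraction/slow-manifold analysis to cover sliding.)
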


The final preliminary result demonstrates that if Assumptions \ref{ass:diff1}-\ref{ass:diff5} hold at the point $\xi \in \mc{X}$, then they also hold in a neighborhood of $\xi$.

\begin{lemma}\label{lemma:neighborhood}
Let Assumption \ref{ass:transverse} hold. Let Assumptions \ref{ass:diff1}-\ref{ass:diff5} hold for the data $\xi \in \mc{X}$. Then there exists $\delta >0$ such that assumptions \ref{ass:diff1}-\ref{ass:diff3} also hold for each $\xi' \in B^\delta(\xi)$. 
\end{lemma}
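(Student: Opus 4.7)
The plan is to exploit continuity of the Filippov flow $\xi \mapsto \oldphi_{\cdot}(\xi)$ with respect to the data, uniformly in $t \in \sp{0,T}$, and then transfer each local condition in Assumptions \ref{ass:diff1}--\ref{ass:diff3} from $\xi$ to nearby $\xi'$ by a perturbation argument. Under Assumption \ref{ass:transverse}, such continuity of Filippov solutions follows either from standard results in \cite{filippov2013differential}, or can be bootstrapped from Lemma \ref{lemma:convergence1} by sandwiching through the smooth flows $\oldphi^\epsilon_t(\cdot)$, whose Lipschitz dependence on $\xi$ is standard for each fixed $\epsilon$. I will therefore take for granted that for every $\eta > 0$ there exists $\delta_0 > 0$ with $\sup_{t \in \sp{0,T}} \norm{\oldphi_t(\xi) - \oldphi_t(\xi')} \leq \eta$ whenever $\xi' \in B^{\delta_0}(\xi)$.

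Next, using Assumption \ref{ass:diff5} I would enumerate $\mc{T}^\xi = \{\hat t_1 < \cdots < \hat t_k\} \subset (0, T)$, and isolate each $\hat t_i$ inside a small window $I_i = (\hat t_i - \gamma_i, \hat t_i + \gamma_i)$ on which Assumption \ref{ass:diff3} supplies strict transversality, i.e.\ $L_{f_j} g(\oldphi_t(\xi), u(t))$ is bounded away from zero with the appropriate sign for the relevant $j \in \{1,2\}$. Outside $\bigcup_i I_i$ the nominal trajectory either stays in the open domain $D_1$ or $D_2$ (so $\abs{g(\oldphi_t(\xi))}$ is uniformly bounded below on each compact sub-interval), or slides along $\Sigma$ with the strict inequalities $L_{f_1}g > 0$ and $L_{f_2}g < 0$ holding along the trajectory. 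For $\xi' \in B^\delta(\xi)$ with $\delta$ sufficiently small, continuity of the flow and of the Lie derivatives yields that $t \mapsto g(\oldphi_t(\xi'))$ remains strictly monotonic on each $I_i$, so an intermediate-value argument produces a unique arrival time $\hat t'_i \in I_i$ at which the required sign condition on $L_{f_j}g$ persists on a slightly smaller interval around $\hat t'_i$. Assumption \ref{ass:diff1} for $\xi'$ then follows at once: $x_0' \not\in \Sigma$ since $\Sigma$ is closed, and $T \not\in \mc{T}^{\xi'}$ since the tail $(\hat t_k + \gamma_k, T]$ lies in the "bounded away from $\Sigma$" regime for $\xi'$ also.

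The main obstacle, in my view, is the careful treatment of sliding segments of the nominal trajectory. On an interval where $\oldphi_t(\xi)$ slides along $\Sigma$, the perturbed trajectory $\oldphi_t(\xi')$ is \emph{not} bounded away from $\Sigma$; rather, it should arrive at $\Sigma$ once near the beginning of the sliding window and then remain on $\Sigma$ for the remainder, producing no further arrivals. To argue this I would exploit the open nature of the sliding region (defined by the strict inequalities $L_{f_1}g > 0$ and $L_{f_2}g < 0$) together with Assumption \ref{ass:transverse} and uniqueness of Filippov solutions: once $\oldphi_t(\xi')$ enters $\Sigma$ at a point where both strict inequalities hold (which is true along the entire nominal sliding segment, and hence, by continuity of the flow and of $L_{f_j}g$, along the perturbed trajectory after possibly shrinking $\delta$), the Filippov solution is forced onto the sliding manifold and must remain there. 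This shows each nominal sliding segment contributes exactly one element to $\mc{T}^{\xi'}$, so the perturbed arrival set inherits the finite structure of $\mc{T}^\xi$, and the transversality conditions already verified on each $I_i$ close out the proof of Assumption \ref{ass:diff3} for $\xi'$.
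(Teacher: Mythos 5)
Your proposal is correct and rests on the same ingredients as the paper's (much terser) proof: continuity of the Filippov flow $\xi \mapsto \oldphi_t(\xi)$ with respect to the data under Assumption \ref{ass:transverse} (which the paper cites from \cite[Chapter 2, Section 7]{filippov2013differential} rather than bootstrapping through Lemma \ref{lemma:convergence1}), combined with continuity of $f_1$, $f_2$ to propagate the sign conditions of Assumption \ref{ass:diff3} to nearby data. Your additional care with the sliding segments and the enumeration of arrival times is a fuller elaboration of the same perturbation argument, not a different route.
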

With these preliminary results established, we are ready to state our first convergence result.
\begin{theorem}\label{thm:convergence1}
Let Assumption \ref{ass:transverse} hold. Let $\xi =(x_0, u) \in \mc{X}$ satisfy Assumptions \ref{ass:diff1}-\ref{ass:diff5}. Then there exists $C >0$ and $\epsilon_0 > 0$ such that for each $\delta \xi = (\delta x_0, \delta u) \in \mc{X}'$ and $\epsilon \leq \epsilon_0$
\begin{equation*}
||DL(\xi; \delta \xi) - DL^\epsilon(\xi; \delta \xi)|| \leq C \epsilon.
\end{equation*}  
\end{theorem}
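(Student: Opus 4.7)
My plan is to extract from Lemma~\ref{lemma:Lipschitz} the existence of a Cauchy limit $D^{0}(\xi;\delta\xi) := \lim_{\epsilon \downarrow 0} DL^{\epsilon}(\xi;\delta\xi)$ together with the $O(\epsilon)$ convergence rate, and then to identify $D^{0}(\xi;\delta\xi)$ with $DL(\xi;\delta\xi)$ by passing to the limit in a fundamental theorem of calculus identity valid for each smooth relaxation.

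Fix $\delta\xi \in \mc{X}'$. By Lemma~\ref{lemma:Lipschitz}, the family $\{DL^{\epsilon}(\xi;\delta\xi)\}_{\epsilon \le \epsilon_{0}}$ is Cauchy in $\R$ as $\epsilon \downarrow 0$, so it converges to some limit $D^{0}(\xi;\delta\xi)$. Sending $\epsilon_{2} \downarrow 0$ in the inequality of Lemma~\ref{lemma:Lipschitz} already yields $\norm{DL^{\epsilon}(\xi;\delta\xi) - D^{0}(\xi;\delta\xi)} \le C\epsilon$. By Lemma~\ref{lemma:neighborhood}, Assumptions~\ref{ass:diff1}--\ref{ass:diff3} also hold throughout a ball $B^{\delta}(\xi)$, so the same Cauchy construction defines $D^{0}(\xi';\delta\xi)$ with a uniform rate for every $\xi' \in B^{\delta}(\xi)$ (after possibly shrinking $\delta$ and $\epsilon_{0}$, which requires that the constants in Lemma~\ref{lemma:Lipschitz} depend only on local data along the nominal trajectory).

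Next I would identify $D^{0}(\xi;\delta\xi)$ with $DL(\xi;\delta\xi)$. For each $\epsilon > 0$ the map $\lambda \mapsto L^{\epsilon}(\xi + \lambda\delta\xi)$ is $C^{1}$, so for $\lambda$ small enough that $\xi + s\delta\xi \in B^{\delta}(\xi)$ for all $s \in [0,\lambda]$,
\begin{equation*}
L^{\epsilon}(\xi + \lambda\delta\xi) - L^{\epsilon}(\xi) = \int_{0}^{\lambda} DL^{\epsilon}(\xi + s\delta\xi;\delta\xi)\, ds.
\end{equation*}
Sending $\epsilon \downarrow 0$: the left-hand side tends to $L(\xi + \lambda\delta\xi) - L(\xi)$ by Lemma~\ref{lemma:convergence1} and Lipschitz continuity of $\ell$; the integrand converges pointwise to $D^{0}(\xi + s\delta\xi;\delta\xi)$ by the previous paragraph and is uniformly bounded in $\epsilon$ and $s$ by Lemma~\ref{lemma:bounded}, so dominated convergence gives
\begin{equation*}
L(\xi + \lambda\delta\xi) - L(\xi) = \int_{0}^{\lambda} D^{0}(\xi + s\delta\xi;\delta\xi)\, ds.
\end{equation*}
Dividing by $\lambda$ and sending $\lambda \downarrow 0$ gives $DL(\xi;\delta\xi) = D^{0}(\xi;\delta\xi)$, provided that $s \mapsto D^{0}(\xi + s\delta\xi;\delta\xi)$ is right-continuous at $s = 0$; this is obtained by the triangle inequality, bounding $|D^{0}(\xi;\delta\xi) - D^{0}(\xi+s\delta\xi;\delta\xi)|$ by $2C\epsilon$ plus $|DL^{\epsilon}(\xi;\delta\xi) - DL^{\epsilon}(\xi + s\delta\xi;\delta\xi)|$, then using smoothness of $\xi' \mapsto DL^{\epsilon}(\xi';\delta\xi)$ for fixed $\epsilon$ and sending first $s \downarrow 0$ and then $\epsilon \downarrow 0$. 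Combining this identification with the rate from the first step completes the proof.

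The main obstacle is the middle step: ensuring the interchange of the $\epsilon \downarrow 0$ and $\lambda \downarrow 0$ limits. Concretely, this requires (i) that the Cauchy construction of $D^{0}$ can be run on an entire neighborhood of $\xi$ in $\mc{X}$, which relies on Lemma~\ref{lemma:neighborhood} together with a careful check that the constants in Lemmas~\ref{lemma:bounded} and~\ref{lemma:Lipschitz} can be taken uniform over $B^{\delta}(\xi)$, and (ii) right-continuity of the limit $D^{0}$ along the ray $s \mapsto \xi + s\delta\xi$, which the triangle-inequality argument above reduces to smoothness properties of $DL^{\epsilon}$ at positive $\epsilon$ plus the uniform-in-basepoint rate.
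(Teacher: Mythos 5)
Your proposal is correct and follows essentially the same route as the paper: both obtain the $O(\epsilon)$ rate by letting one regularization parameter tend to zero in Lemma~\ref{lemma:Lipschitz}, and both identify the limit of $DL^{\epsilon}(\xi;\cdot)$ with $DL(\xi;\cdot)$ by combining the pointwise convergence $L^{\epsilon}\to L$ from Lemma~\ref{lemma:convergence1} with boundedness and uniform (Lipschitz-in-$\epsilon$) convergence of the directional derivatives on a neighborhood supplied by Lemmas~\ref{lemma:bounded}, \ref{lemma:Lipschitz} and \ref{lemma:neighborhood}. The only difference is cosmetic: the paper cites the classical limit-of-derivatives theorem (Dieudonn\'e 8.6.3) for the interchange of limits, whereas you re-derive it by hand via the fundamental theorem of calculus, dominated convergence, and a right-continuity argument, while flagging the same uniformity-of-constants-over-the-ball point that the paper asserts without further detail.
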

\begin{proof}
Let $\set{\epsilon_i}_{i \in \N}$ be a sequence such that $\epsilon_i \to 0$ as $i \to \infty$. By Lemma \ref{lemma:convergence1} and the continuity of $\ell$, we see that the sequence of cost functionals $\set{L^{\epsilon_i}}_{i \in \N}$ converges point-wise to $L$. Using a standard uniform convergence argument (see e.g. \cite[Theorem 8.6.3]{dieudonne2013foundations}), we know that if we can show that there exists $\delta >0$ such that for each $\xi' \in B^\delta(\xi)$ the sequence of linear maps $\set{ DL^{\epsilon_i}(\xi' ; \cdot)}_{i \in \N}$ is bounded and converges uniformly, then its limit point must be $DL(\xi' ; \cdot)$. In other words, this would demonstrate that directional derivatives of $\set{L^{\epsilon_i}}_{i \in \N}$ converge to the directional derivatives of $L$ in a $\delta$-neighborhood of $\xi$. Let $\delta \xi \in \mc{X'}$ be arbitrary. By Lemmas 2, 3 and 4, we know that there exists constants $C_1,C_2>0$, $\epsilon_0 >0$, and $\delta >0$ such that for each $\epsilon \leq \epsilon_0$ and $\xi' \in B^{\delta}(\xi)$
\begin{equation}
    DL^\epsilon(\xi' ; \delta \xi) \leq C_1 \norm{\delta \xi}
\end{equation}
and $\bar{\epsilon}_1, \bar{\epsilon}_2 \leq \epsilon_0$ we have
\begin{equation}\label{eq:lipschitz1}
    \norm{DL^{\bar{\epsilon}_1}(\xi'; \delta \xi) - DL^{\bar{\epsilon}_2} (\xi'; \delta \xi)} \leq C_2 |\bar{\epsilon}_1 - \bar{\epsilon}_2|,
\end{equation}
which demonstrates that the sequence of linear maps $\set{DL^\epsilon(\xi' ; \cdot)}_{i \in N}$ converges to a bounded limit as $ i \to \infty$, as desired. The linear rate of convergence follows from \eqref{eq:lipschitz1}. 
\end{proof}

At this point it is prudent to compare Theorem \ref{thm:convergence1} to the convergence results presented in \cite{stewart2010optimal}. First we note that our regularity Assumption \ref{ass:transverse} is weaker than the one-sided Lipschitz assumption made in \cite[Section 3.1]{stewart2010optimal}. Next, we note that we provide a concrete rate of convergence in Theorem \ref{thm:convergence1}. Note that Theorem \ref{thm:convergence1} also demonstrates that the \emph{equations of sensitivity} \cite[Chapter 2, Section ll]{filippov2013differential} for the regularized system converge, at a rate that is linear in $\epsilon$, to the equations of sensitivity of the non-smooth system. Many applications, such as assessing the stability of periodic orbits \cite{westervelt2003hybrid}, rely on accurately assessing the equations of sensitivity with known margins of error. We anticipate that the bound in Theorem \ref{thm:convergence1} will also find use in such settings. 
\subsection{Consistent Approximations}\label{sub:minimizers}
In practice, solutions to difficult optimization problems are often approximated by solving a \emph{sequence} of approximate problems. The most common example are penalty methods, wherein a sequence of increasingly penalized unconstrained optimization problems are used to find minimzers for a difficult constrained optimization problem.  More generally, analyzing sequential algorithms of this sort falls into the framework of \emph{consistent approximations} as stated by Polak \cite[Chapter 3.3]{polak2012optimization}. In this section, we discuss how this theory can be applied to find minimizers of $P$ using so-called "Master Algorithms" and a sequence of regularized problems $\set{P^{\epsilon_i}}_{i \in \N}$.

Even though, as we saw in the proof of Theorem \ref{thm:convergence1}, $L^\epsilon$ converges to $L$ as $\epsilon \to 0$, this is in general not sufficient to show that a sequence of minimizers to the sequence $\set{P^\epsilon_i}_{i \in N}$ accumulate to a minimizer of $P$ \cite[Chapter 3.3]{polak2012optimization}. In particular, the consistency results we pursue will rely on comparing \emph{optimality functions} for the non-smooth and regularized problems, which provide additional derivative information about the problems. 

\begin{definition}
Let $G$ be an optimization problem on the normed space $S$. We say that $\theta \colon (-\infty, 0]$ is an \emph{optimality function} for $G$ iff it is continuous and $\theta(\bar{x}) = 0$ for each minimizer $\bar{x} \in S$.
\end{definition}

Given a difficult optimization problem and optimality function pair $(G, \theta)$, the theory of consistent approximations relies on finding a sequence of approximate optimization problems and optimality functions $\set{(G_n,\theta_n )}_{n \in \N}$ and being able to show that the epigraphs of the pair $(G_n,\theta_n)$ converge to the epigraphs of $(G, \theta)$ as $N \to \infty$. Then one can deploy the aforementioned Master Algorithms \cite[Chapter 3.3.3 ]{polak2012optimization} to solve of $G$ by iteratively optimizing a subsequence of $\set{G_n}_{n \in \N}$. Due to space constraints, we leave a more detailed introduction to \cite{westenbroek2} or \cite{polak2012optimization}, and instead discuss here how to obtain such consistency results for the problems we consider. 

Due to the non-differentiability of $L$, we modify the approach typically used to demonstrate such consistency results for approximations to smooth optimal control problems \cite[Chapter 4]{polak2012optimization}. In particular, we will restrict our attention to regions where Assumptions \ref{ass:diff1}-\ref{ass:diff5} are satisfied and the regularized problems provide accurate derivative information about the non-smooth problem in the limit.
We propose the following optimality functions for the smooth and non-smooth problems, which are restricted to subsets of $\mc{X}$ on which Assumptions \ref{ass:diff1}-\ref{ass:diff5} are satisfied. 
\begin{proposition}\label{prop:optimality}
Let $S \subset \mc{X}$ be a connected, convex set such that Assumptions \ref{ass:diff1}-\ref{ass:diff5} are satisfied for each $\xi \in \mc{S}$. Consider the restricted optimization problems:
\begin{equation}
    (\mathbf{P}_S) \quad \inf_{\xi \in S} L(\xi),
\end{equation}
\begin{equation}
    (\mathbf{P}_{S}^\epsilon) \quad \inf_{\xi \in S} L^\epsilon(\xi)
\end{equation}
Then the following two functions are optimality functions for $P_{S}$ and $P_{S}^\epsilon$, respectively: 
\begin{equation}
    \theta_S(\xi) = \inf\set{DL(\xi; \delta \xi) \colon \delta \xi \in \mc{X}' \text{ and }  \xi + \delta \xi \in \mc{X}}
\end{equation}
\begin{equation}
   \theta_{S}^\epsilon(\xi) = \inf\set{DL^\epsilon(\xi; \delta \xi) \colon \delta \xi \in \mc{X}' \text{ and } \xi + \delta \xi \in \mc{X}}
\end{equation}
\end{proposition}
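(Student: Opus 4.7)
My plan is to verify, for each of $\theta_S$ and $\theta_S^\epsilon$, the two defining properties of an optimality function: values in $(-\infty, 0]$ with vanishing at minimizers, and continuity on $S$. Nonpositivity is immediate since $\delta \xi = 0$ is always admissible and directional derivatives vanish in the zero direction. For the vanishing-at-minimizers property, fix a minimizer $\bar{\xi}$ of $P_S$ and an admissible direction $\delta \xi$; by convexity of $S$ the segment $\bar{\xi} + \lambda \delta \xi$ stays in $S$ for $\lambda \in [0,1]$, so $L(\bar{\xi} + \lambda \delta \xi) \geq L(\bar{\xi})$ by minimality, and dividing by $\lambda$ and passing to the limit gives $DL(\bar{\xi}; \delta \xi) \geq 0$, hence $\theta_S(\bar{\xi}) \geq 0$. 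Together with nonpositivity this forces $\theta_S(\bar{\xi}) = 0$, and the argument for $\theta_S^\epsilon$ is identical with $L^\epsilon$ in place of $L$.

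For continuity of $\theta_S^\epsilon$: smoothness of $f^\epsilon$ together with the adjoint formula in Section 3.2.1 makes $DL^\epsilon(\xi; \delta \xi)$ jointly continuous in $(\xi, \delta \xi)$ and linear in $\delta \xi$. Reparameterizing $\xi' = \xi + \delta \xi$ gives $\theta_S^\epsilon(\xi) = \inf_{\xi' \in \mc{X}} DL^\epsilon(\xi; \xi' - \xi)$, the value of a linear program on the bounded, weakly-closed, convex set $\mc{X}$. A short estimate of the form $\lvert \theta_S^\epsilon(\xi_1) - \theta_S^\epsilon(\xi_2) \rvert \leq \norm{g_{\xi_1} - g_{\xi_2}} \cdot \mathrm{diam}(\mc{X}) + \norm{g_{\xi_1}} \cdot \norm{\xi_1 - \xi_2}$, where $g_\xi$ is the Riesz representer of $DL^\epsilon(\xi; \cdot)$, then delivers continuity.

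Continuity of $\theta_S$ is the main substantive step. I would deduce it from \emph{uniform} convergence $\theta_S^\epsilon \to \theta_S$ on small neighborhoods. Fix $\xi \in S$; by Lemma \ref{lemma:neighborhood} there is $\delta > 0$ on which Assumptions \ref{ass:diff1}--\ref{ass:diff5} persist. Inspection of the proofs of Lemmas \ref{lemma:bounded}--\ref{lemma:Lipschitz} and Theorem \ref{thm:convergence1} should reveal that the constants $C$ and $\epsilon_0$ can be chosen uniformly for $\xi' \in B^\delta(\xi) \cap S$, giving $\lvert DL^\epsilon(\xi'; \delta \xi) - DL(\xi'; \delta \xi) \rvert \leq C \epsilon \norm{\delta \xi}$ uniformly in $\xi'$ and $\delta \xi$. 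Since the admissible direction set has diameter at most $\mathrm{diam}(\mc{X})$ and the map that sends a function to its infimum is $1$-Lipschitz in the sup norm, this yields uniform convergence of $\theta_S^\epsilon$ to $\theta_S$ on $B^\delta(\xi) \cap S$, and continuity of each $\theta_S^\epsilon$ transfers to the limit.

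The principal obstacle is precisely this neighborhood-uniform upgrade of Theorem \ref{thm:convergence1}: the theorem as stated fixes $\xi$, but the continuity proof requires uniformity of $C$ and $\epsilon_0$ over $B^\delta(\xi) \cap S$. Lemma \ref{lemma:neighborhood} guarantees the nondegeneracy Assumptions \ref{ass:diff1}--\ref{ass:diff5} persist, but one must additionally check that the number and separation of surface crossings, and the transversal slopes $L_{f_i} g$ at those crossings, remain uniformly bounded away from degeneracies under data perturbation. These properties follow from continuous dependence of Filippov trajectories (Lemma \ref{lemma:convergence1}) and the strict transversality of Assumption \ref{ass:diff3}, but carrying out the bookkeeping is where the technical care concentrates.
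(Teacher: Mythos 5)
Your proposal follows essentially the same route as the paper's own (very brief) proof: the minimizer-capture property is the standard first-order argument (the paper simply defers it to Polak), continuity of $\theta_S^\epsilon$ comes from continuity of $\xi \mapsto DL^\epsilon(\xi;\delta\xi)$ plus an infimum-stability argument (the paper invokes Danskin's theorem where you give an explicit Riesz-representer estimate), and continuity of $\theta_S$ comes from locally uniform convergence of the regularized directional derivatives --- exactly the paper's appeal to the uniform convergence established in Theorem \ref{thm:convergence1}, whose proof is in fact already carried out with constants valid on a ball $B^\delta(\xi)$ via Lemmas \ref{lemma:bounded}--\ref{lemma:neighborhood}, so the neighborhood-uniform upgrade you single out as the main obstacle is implicit in the paper rather than missing from your argument. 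One caution on your vanishing-at-minimizers step: you use convexity of $S$ to keep $\bar{\xi} + \lambda\,\delta\xi$ inside $S$, but the admissible directions in the stated optimality functions only require $\xi + \delta\xi \in \mc{X}$, not $\xi + \delta\xi \in S$, so minimality of $\bar{\xi}$ over $S$ does not by itself give $L(\bar{\xi} + \lambda\,\delta\xi) \geq L(\bar{\xi})$ along such directions; the argument (and, read literally, the proposition) needs the constraint $\xi + \delta\xi \in S$, or $S = \mc{X}$, a mismatch inherited from the paper's statement rather than a flaw specific to your proof.
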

\begin{proof}
We omit a detailed proof of the fact that the proposed optimality functions appropriately capture the minimizers of the respective optimization problems, as the argument closely follows the arguments made \cite[Theorem 5.6.9]{polak2012optimization} and \cite[Proposition 4.5]{polak1984study}. The continuity of the relaxed optimality function follows by noting that time map $ \xi \to DL^\epsilon(\xi, \delta \xi)$ is continuous for each $\epsilon >0$ \cite[ Theorem 5.6.9]{polak2012optimization}, and recalling Danskin's Theorem. The continuity of the proposed optimality function for the non-smooth problem follows from the uniform convergence displayed in Theorem \ref{thm:convergence1}.
\end{proof}

We can now state the following consistency result:
\begin{theorem}
Let $\set{\epsilon_i}_{i \in \N}$ be a sequence such that $\epsilon_i \to 0$ as $i \to \infty$, and let $S$ be as in Proposition \ref{prop:optimality}. Then the pair $(L_S^{\epsilon_i},\theta_S^{\epsilon_i})$ epi-converges to $(P_S, \theta_S)$ as $i \to \infty$.
\end{theorem}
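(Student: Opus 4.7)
The plan is to unpack Polak's definition of epi-convergence of the pair $(L_S^{\epsilon_i}, \theta_S^{\epsilon_i})$, which in this setting reduces to showing epigraph convergence of $L^{\epsilon_i}$ restricted to $S$ to that of $L$, and of $\theta_S^{\epsilon_i}$ to that of $\theta_S$. The cleanest route is to upgrade both to \emph{uniform} convergence on $S$, since uniform convergence of continuous real-valued functions immediately yields the required epigraph convergence in Polak's framework \cite[Chapter 3.3]{polak2012optimization}.

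For the cost functionals, Lemma \ref{lemma:convergence1} provides $\norm{\oldphi_T(\xi) - \oldphi_T^\epsilon(\xi)} \leq C\epsilon$ with a constant independent of $\xi \in \mc{X}$. Composing with the Lipschitz terminal cost $\ell$ yields $|L(\xi) - L^{\epsilon_i}(\xi)| \leq C' \epsilon_i$ uniformly on $S$, which settles the cost-side half of the epi-convergence claim.

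For the optimality functions, Theorem \ref{thm:convergence1} gives $|DL(\xi; \delta\xi) - DL^{\epsilon_i}(\xi; \delta\xi)| \leq C \epsilon_i$ at each $\xi \in S$ and each admissible direction $\delta\xi \in \mc{X}'$. Because $\mc{X}$ is bounded, the feasible perturbation set $\setn{\delta\xi \in \mc{X}' : \xi + \delta\xi \in \mc{X}}$ appearing in the definition of the optimality functions is uniformly bounded in $\xi$, so taking infima preserves the $O(\epsilon_i)$ bound and produces $|\theta_S(\xi) - \theta_S^{\epsilon_i}(\xi)| = O(\epsilon_i)$ pointwise. Continuity of both $\theta_S$ and $\theta_S^{\epsilon_i}$ was already established in Proposition \ref{prop:optimality}; a uniform version of this pointwise estimate then delivers the second epigraph convergence.

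The principal obstacle is promoting the pointwise-in-$\xi$ bound of Theorem \ref{thm:convergence1} to a bound that is uniform over $\xi \in S$. To handle this, I would track the $\xi$-dependence of the constants in the proofs of Lemmas \ref{lemma:bounded}--\ref{lemma:neighborhood}: these arise from the cardinality of the crossing-time set $\mc{T}^\xi$ and from a lower bound on the transversality margin $\min\setn{L_{f_1}g(\oldphi_t(\xi),u(t)), -L_{f_2}g(\oldphi_t(\xi),u(t))}$ at those crossings, both of which vary continuously in $\xi$ on $S$ under Assumptions \ref{ass:diff1}--\ref{ass:diff5}. A standard covering argument on bounded closed subsets of $S$---or, equivalently, an appeal to the Lipschitz-type estimate of Lemma \ref{lemma:Lipschitz} propagated across $S$ via Lemma \ref{lemma:neighborhood}---upgrades the estimate to the required uniform one, at which point the two uniform convergence statements combine to yield epi-convergence of the pair.
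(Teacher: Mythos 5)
Your proposal is correct and rests on the same two pillars as the paper's own (very terse) proof: convergence of the cost functionals $L^{\epsilon_i}$ to $L$ on $S$ (via Lemma \ref{lemma:convergence1} composed with the Lipschitz terminal cost $\ell$), and Theorem \ref{thm:convergence1} for the optimality functions. The difference is one of rigor rather than route: the paper simply asserts that the pointwise convergence of $\set{L_S^{\epsilon_i}}$ noted in the proof of Theorem \ref{thm:convergence1} ``demonstrates epi-convergence'' and that the epi-convergence of $\set{\theta_S^{\epsilon_i}}$ ``follows directly'' from Theorem \ref{thm:convergence1}, whereas you correctly observe that pointwise convergence alone does not in general yield epigraph convergence and instead upgrade both halves to uniform convergence. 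For the cost side this upgrade is free, since the constant in Lemma \ref{lemma:convergence1} is already uniform over $\mc{X}$; for the optimality functions you pass the bound of Theorem \ref{thm:convergence1} through the infimum over the (bounded) feasible direction set and then propagate it across $S$ by a covering argument built on Lemmas \ref{lemma:Lipschitz} and \ref{lemma:neighborhood}. That last uniformity-in-$\xi$ step is the only place where your argument goes beyond what the paper records, and you sketch rather than execute it; but the ingredients you invoke (persistence of Assumptions \ref{ass:diff1}--\ref{ass:diff5} on a neighborhood, the Lipschitz-in-$\epsilon$ estimate with locally uniform constants) are the right ones, so your write-up is, if anything, more complete than the published two-sentence proof.
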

\begin{proof}
The pointwise convergence of $\set{L_S^{\epsilon_i}}_{i \in \N}$ to $L_S$ noted in the proof of Theorem \ref{thm:convergence1} also demonstrates epi-convergence of the sequence. The epi-convergence of $\set{\theta_S^{\epsilon_i}}_{i \in \N}$ to $\theta_S$ follows directly from the statement of Theorem \ref{thm:convergence1}.
\end{proof}

This result demonstrates that the appropriate consistency results from \cite[Chapters 3-4]{polak2012optimization} hold, so long as we restrict our attention to  subsets where Assumptions \ref{ass:diff1}-\ref{ass:diff5} hold. In practice however, we do not want to restrict ourselves ahead of time to a single compact subset $S$ on which Assumptions \ref{ass:diff1}-\ref{ass:diff5} are satisfied. In particular, we actually want to optimize over all of $\mc{X}$ and iteratively optimize over a sequence $\set{L^{\epsilon_i}}_{i \in \N}$, in which case we should expect that successive iterates will sometimes produce initial conditions which cross the surface of discontinuity between iterations, or generate trajectories whose final state crosses the surface between iterations. Nonetheless, so long as an algorithm which successively optimizes over $\set{L^\epsilon_i}_{i \in \N}$ accumulates to a point which satisfies Assumptions \ref{ass:diff1}-\ref{ass:diff5}, we can invoke Theorem 2 in a neighborhood of this point to study the limiting behavior of the algorithm. Though we plan to provide a more exhaustive exploration of this topic and introduce implementable numerical algorithms for solving $\mathbf{P}$ in a forthcoming article, we believe the results we have indicated here provide a valuable basis for this important future step.  

\section{Trajectory Optimization via Smoothing on a Reduced-order Model}
\label{section:example1}
To demonstrate the utility of the smoothing technique we have investigated, we first use the approach to generate hopping trajectories for an actuated Spring-mass hopper, which was first proposed in \cite{xiong2018bipedal} for realizing the hopping on the bipedal robot Cassie. Its nonlinear leg spring model has also shown great value for other robotic applications such as underactuated bipedal walking \cite{xiong2018coupling} and fully-actuated humanoid walking \cite{xiong2019exo}. We note, however, that the trajectory generation procedures proposed in these prior works required fixing the sequence of contacts the robot makes with the ground \emph{a priori}. Here, we demonstrate that the regularization techniques we consider can be used to effectively generate optimal trajectories on the Spring-mass model without pre-specifying the contact sequence ahead of time.

\subsection{Actuated Spring-mass Model of Hopping}\label{sub:slip}
The actuated Spring-mass hopper is depicted in Fig. \ref{results1} (a). The model has four states: the height of the mass, $z$, and its time derivative $\dot{z}$, as well as the natural length of the leg spring, $L$, and its time derivative, $\dot{L}$. The model is actuated by controlling $L$, i.e., 
\begin{equation}
\ddot{L} = u,
\end{equation}
where $u$ is assumed to be the actuation on the leg. The \emph{Flight} phase is ballistic, thus the dynamics of the mass is,
\begin{align}
\ddot{z} = -g
\end{align}
where $g$ is the gravitational constant. When the hopper is on the ground, the dynamics of the mass is,
\begin{align}
\ddot{z} &= \frac{F(z, \dot{z},L, \dot{L})}{m} -g
\end{align}
where $m$ is the value of the mass and $F(z, \dot{z},L, \dot{L}) =  K(L)(L-z) + D(L) (\dot{L} - z)$ is the leg force from the spring deflection. Here, $K(L)$ and $D(L)$ are the stiffness and damping coefficient, respectively. For further details of the model can be found in \cite{xiong2018bipedal}. The hopper lifts off the ground with $F\rightarrow0$. In practice, the $K(L)$ is much larger than $D(L)$. Thus we simplify the assumption that the robot lifts off when $L = z$. 
Collecting the states of the robot as $x = (z, \dot{z}, L , \dot{L})$, we represent the dynamics using the piecewise-smooth vector field $f \colon \R^4 \times \R \to \R^4$ where,
\begin{equation}
\dot{x} = f(x,u) = \begin{cases}
f_\text{F}(x,u) & \text{ if } g(z) >0 \\
f_\text{G}(x,u) & \text { if } g(z) < 0,
\end{cases}
\end{equation}
where, 
\begin{align}
f_\text{F}(x,u) &=(\dot{z}, -g, \dot{L}, u)^T \\
f_\text{G}(x,u) &=(\dot{z},F(x, \dot{z}, L, \dot{L})/m -g, \dot{L}, u)^T \\
g(x) &= z - L.
\end{align}

\subsection{Trajectory Optimization Via Smoothing}
Now we apply the proposed smoothing method for optimizing a hopping motion without specifying the contact modes. The hopping task we choose is to reach an apex height $z_{\text{apex}}=1$m at $t_{\text{apex}}=1$s from a static standing configuration, and then settle back to its original height at $t_{\text{f}}=1.8$s. The initial condition is set as $x_0 = (z(0), \dot{z}(0), L(0), \dot{L}(0) )^T = (.65, 0 , .75, 0 )^T$. The input is bounded, i.e., $u\in U = [-10, 10]$. The cost function is,
\begin{align}
    J(x)  = &(z(t_{\text{apex}}) - z_{\text{apex}})^2 + \dot{z}(t_{\text{apex}})^2 + \nonumber\\ & (z(t_{\text{f}}) - z(0))^2 + \dot{z}(t_{\text{f}})^2 + \int u^2 dt. \nonumber
\end{align}

To approximate a solution to the problem numerically, we use the regularization parameter $\epsilon = .01$, and Euler integration with 200 uniformly spaces gridpoints. The resulting finite dimensional optimization problem is solved using the Matlab fmincon function. The optimized Spring-mass trajectory is shown in Fig. \ref{results1} (b). Due to the actuation limitations of the model, the optimized hopping actually requires two hops to reach to the apex height and one additional hop to settle.

\begin{figure}[t]
      \centering
      \includegraphics[width = 2.7in]{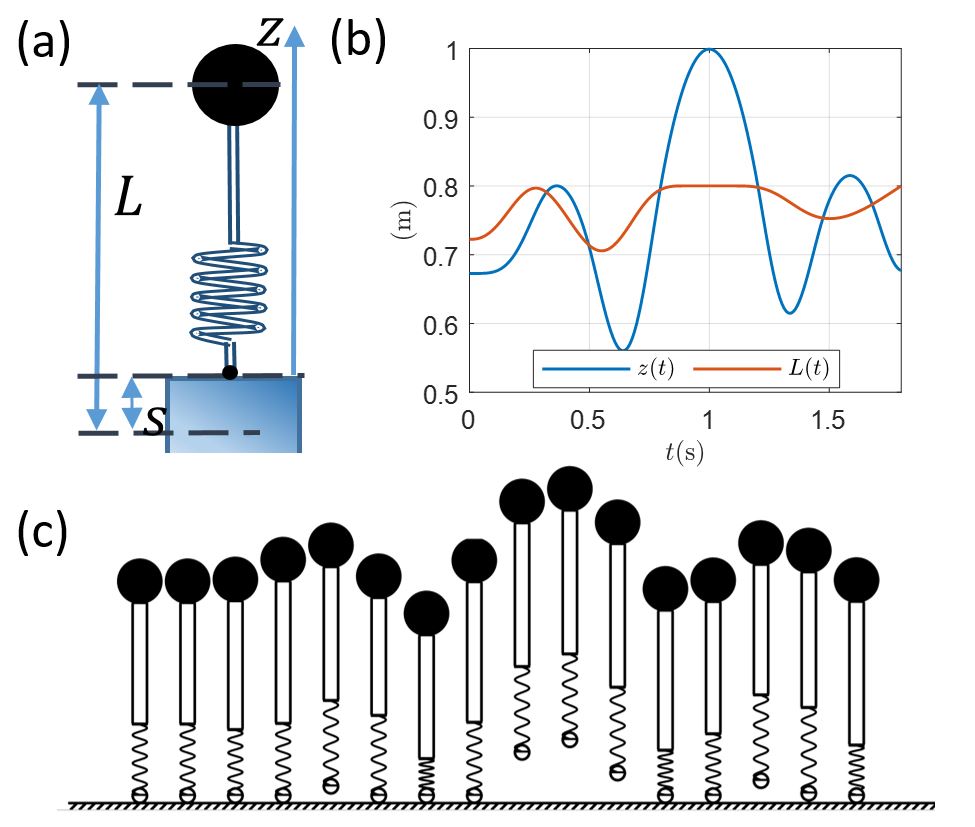}
      \caption{(a) The actuated Spring-mass model. (b) The optimized hopping trajectories. (c) Visualization of the hopping.}
      \label{results1}
\end{figure}

\section{Hopping Embedding on the Exoskeleton}
\label{section:example2}
 Despite the simplicity of the Spring-mass hopper, the optimized dynamics can be embedded onto complex robotic systems. This further amplifies the application of proposed smoothing technique. Here we briefly describe the process of embedding the hopping dynamics of the Spring-mass onto the exoskeleton (Fig. \ref{results2} (a)). More examples of the dynamics embedding from simple models to full robot models can be found in \cite{xiong2018bipedal} \cite{xiong2018coupling} \cite{xiong2019exo} and the references therein.

\subsection{Robot Model and Hybrid Dynamical Model} 
The exoskeleton (Exo) has two legs with 12 motor joints in total. Its equation of motion can be described by the floating-base Euler-Lagrange equation, 
\begin{eqnarray}
&& M\ddot{q} + h = Bu + J_{v}^T F_{v},  \label{eom} \\
&& J_{v} \ddot{q} + \dot{J}_{v} \dot{q} = 0, \label{hol}
\end{eqnarray}
where $q\in SE(3) \times \mathbb{R}^{n = 12}$.
The exact meaning of each term can be found in \cite{xiong2018bipedal}.
For the hopping behavior, we assume the two feet always have same contact, i.e. either both feet are or are not in contact with the ground. Thus the hopping is an alternation between two domains, i.e. \textit{Ground} and \textit{Flight} domain. We use subscript $v$ to denote different domains. The guards are defined from the transition conditions, i.e. ground normal forces$\rightarrow$0 and foot position$\rightarrow$0. We model the impact as plastic impact from \textit{Flight}$\rightarrow$\textit{Ground}, where the joint velocities have jumps, i.e.,
$ \dot{q}_{\text{Ground}}^{+} = \Delta(q)\dot{q}_{\text{Flight}}^{-}.$

\subsection{Output Selection and CLF-QP based Force Control.} 

For \textit{Ground}, the robot is fully actuated and 6 outputs are required to be defined. We first select the center of mass (COM) position and the yaw and roll angles of the pelvis/torso of the robot as relative degree 2 outputs, i.e.,
\begin{equation}
 \mathcal{Y}_2^{\textrm{Ground}}(q,t) = \begin{bmatrix}
  p_{\textrm{COM}^x}(q) \\
   p_{\textrm{COM}^y}(q)  \\
   p_{\textrm{COM}^z}(q)  \\
   \mathbf{\phi}_{\textrm{pelvis}}(q) 
     \end{bmatrix} -
\begin{bmatrix}
0 \\
  0  \\
    z^{\text{opt}}(t)  \\
\mathbf{0}  \end{bmatrix},
\end{equation}
where $ z^{\text{opt}}(t)$ is the mass trajectory of the Spring-mass model for the embedding. It is also necessary to have zero centrodial angular momentum before \textit{Flight} phase \cite{xiong2018bipedal}. We select the last output as, 
\begin{equation}
\mathcal{Y}_1^{\textrm{Ground}}(q,t) = H_\text{pitch}(q, \dot{q}) - 0,
\end{equation}
where $H_\text{pitch}(q, \dot{q})$ is the pitch centrodial momentum.

In \textit{Flight}, the robot is underactuated. Here, 12 outputs are required to be defined since $n_u=12$. We selected the outputs as the positions of the feet to the COM and foot orientations. The desired vertical positions between the COM and the feet are the real leg length $L - z$ of the Spring-mass hopper. 
\begin{equation}
 \mathcal{Y}_2^{\textrm{Flight}}(q,t) = \begin{bmatrix}
  \mathbf{p}_{\text{Feet} \rightarrow \text{COM}}^{x}(q) \\
  \mathbf{p}_{\text{Feet} \rightarrow \text{COM}}^{y}(q) \\
  \mathbf{p}_{\text{Feet} \rightarrow \text{COM}}^{z}(q) \\
   \mathbf{\phi}_{\textrm{Feet}}(q)     \end{bmatrix} -
\begin{bmatrix}
\mathbf{0} \\
  \mathbf{c}  \\
   L^{\text{opt}}(t) - z^{\text{opt}}(t)  \\
\mathbf{0} \end{bmatrix},
\end{equation}
where $\mathbf{c}$ is a constant vector and its value depends on the initial state of the flight phase.  

We apply the control Lyapunov function based Quadratic programs (CLF-QP) \cite{ames2013towards} \cite{xiong2018coupling} for feedback-zeroing the outputs. The exponential convergence on the output dynamics is enforced by the inequality condition on the constructed Lyapunov function, i.e., 
\begin{equation}
\dot{V}(u, q, \dot{q}) \leq - \gamma V(q, \dot{q}),
\end{equation}
with $\gamma >0$. This inequality is affine with respect to $u$,
\begin{equation}
A_v^{\textrm{CLF}}(q,\dot{q}) u \leq b_v^{\textrm{CLF}}(q,\dot{q}).
\end{equation}
For external contacts as holonomic constraints, the holonomic forces $F_v$ are affine with respect to $u$, 
\begin{equation}
 F_v = A_v u + b_v, \label{utoF}
\end{equation}
where $A_v, b_v$ can be found in \cite{xiong2019exo}. Thus contact constraints such as friction cones, non-negative normal forces can be formulated as an inequality constraints on $u$, 
\begin{equation}
C_v A_v u \leq -C_v b_v,
\end{equation}
where $C_v$ is a constant matrix. Since it is desirable to embed the normal force of the Spring-mass hopping on the full robot, an equality constraint can be expressed for the force control, 
\begin{equation}
\label{ForceEquality}
S_v A_v u = F^{\text{opt}}  -S_v b_v,
\end{equation}
where $S_v$ is a selection matrix to extract the vertical normal forces from the holonomic force vector and $F^{\text{opt}}$ is the leg force from the Spring-mass optimization (Fig. \ref{results2} (b)). It is necessary to relax the force control since the vertical COM position is one of the outputs for control \cite{xiong2019exo}. Thus the equality in Eq. \eqref{ForceEquality} becomes,
\begin{equation}
\label{ForceRelaxation}
  \underset{c_{lb}}{\underbrace{(1-c)  F^{\text{opt}} - S_v b_v}} \leq  S_v A_v u   \leq \underset{c_{ub}}{\underbrace{ (1+c)  F^{\text{opt}} - S_v b_v}},
\end{equation}
where $c\in (0,1)$ is a coefficient of the relaxation. 

The main control law for the full-order model in each domain is thus formulated as a quadratic program as follows:
\begin{align}
\label{QP}
\quad u^{*} =  \underset{u \in \mathbb{R}^{12}, \delta\in \mathbb{R}} {\text{argmin}}& u^T  H u + 2 F u + p\delta^2, \\
\text{s.t.} \quad  &  A_v^{\textrm{CLF}}(q,\dot{q}) u \leq b_v^{\textrm{CLF}}(q,\dot{q})  + \delta, \ \ \quad  \text{(CLF)}\nonumber \\
 \quad &  C_v A_v u \leq -C_v b_v,   \quad  \quad \quad \quad \quad \text{(Contact)}    \nonumber  \\
 \quad  & u_{lb} \leq u \leq u_{ub},  \nonumber  \quad  \quad  \quad   \quad   \text{(Torque Limit)}\\
 \quad  &  c_{lb} \leq   S_v A_v u \leq c_{ub}, \ \ \ \quad \text{(Force Control)}\nonumber
\end{align}
where $\delta$ is a relaxation term for increasing the instantaneous feasibility of the QP, and $p$ is a positive penalty constant. 

\subsection{Simulation Result}\label{sec:simulation2}
We primarily implemented the embedding in simulation to demonstrate the value of the smoothing technique. Details of the implementation can be found in \cite{xiong2019exo}, where same embedding was implemented for realizing walking on the Exo. Fig. \ref{results2} (c)(d) show the resulting hopping motion. The COM trajectory matches well in general. The exerted hopping corresponds to the Spring-mass hopper. A video of the simulation can also be found in \cite{Supplementary}.
\begin{figure*}[!t]
      \centering
      \includegraphics[width = 7in]{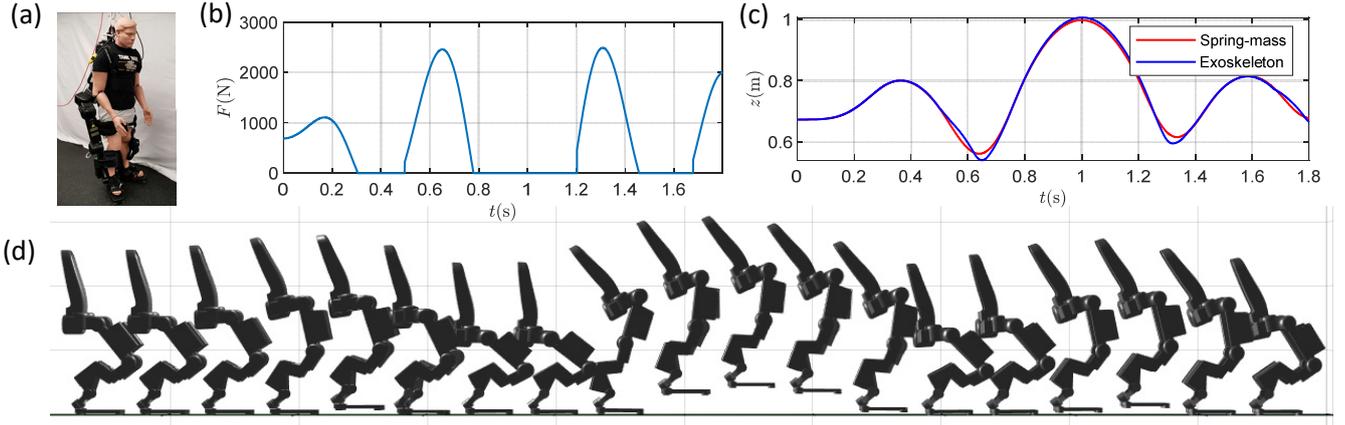}
      \caption{(a) The Exoskeleton. (b) The optimized ground force of the hopping. (c) Comparison of the COM trajectories between the Spring-mass and the Exo. (d) The snapshots of the realized hopping.}
      \label{results2}
\end{figure*}

\section{Conclusion}
This paper investigated a method for smoothing out a discontinuous differential equations. We studied optimal control problems over the discontinuous system and its regularizations, derived a number of useful properties for these problems, examined conditions under which the regularizations can be used to obtain accurate derivative information about the non-smooth problem, and  outlined how to invoked the theory of consistent approximations to study how minimizers of the regularization accumulate to minimizers of the non-smooth problem. Finally, we demonstrated the efficacy of the smoothing approach by using it in conjunction with recent reduced-order modelling techniques to generate non-trivial motion plans on a 18-DOF exoskeleton robot in simulation. 

\section{Acknowledgement}
We would like to thank Thomas Gurriet in Amber lab for providing the Simulink simulation of the Exoskeleton. 
\bibliographystyle{IEEEtran}

\appendix
\section{Technical Lemmas}
This section contains a number of technical lemmas and proofs of lemmas that were stated earlier in the document. 
\begin{lemma}\label{lemma:total_bound}
Let Assumption \ref{ass:transverse} hold. Then there exists $C>0$ and $\epsilon_0>0$ such that for each $\xi \in \mc{X}$ and $t \in \sp{0,T}$ we have
 \begin{equation}
\norm{\oldphi_t(\xi)} \leq C
\end{equation} 
and for  each $\epsilon \leq \epsilon_0$ we have
\begin{equation}
\norm{\oldphi_t^\epsilon(\xi)} \leq C
\end{equation}
\end{lemma}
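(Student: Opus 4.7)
The plan is to derive both bounds from a uniform linear growth estimate on the vector fields, coupled with Gronwall's inequality. Compactness of $U$ together with Lipschitz continuity of $f_1, f_2$ yields constants $A, B > 0$ such that $\|f_i(x, u)\| \leq A\|x\| + B$ for every $(x, u) \in \R^n \times U$ and $i \in \{1, 2\}$: decompose $\|f_i(x, u)\| \leq \|f_i(x, u) - f_i(0, u)\| + \|f_i(0, u)\|$, where the first term is at most $L\|x\|$ by Lipschitz continuity and the second is uniformly bounded on the compact set $U$ by continuity of $f_i(0, \cdot)$.

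For the smooth trajectories, I would observe that $f^\epsilon(x, u)$ is a pointwise convex combination of $f_1(x, u)$ and $f_2(x, u)$ for every $\epsilon > 0$, so it inherits the same estimate $\|f^\epsilon(x, u)\| \leq A\|x\| + B$ with constants independent of $\epsilon$. Applying this to the integral form $x^{(\epsilon, \xi)}(t) = x_0 + \int_0^t f^\epsilon(x^{(\epsilon, \xi)}(\tau), u(\tau)) \diff{\tau}$ and invoking Gronwall's inequality gives $\|x^{(\epsilon, \xi)}(t)\| \leq (\|x_0\| + BT)e^{AT}$ on $[0, T]$. Compactness of $D$ bounds $\|x_0\|$ by some $M > 0$ over all $\xi \in \mc{X}$, producing the uniform constant $C = (M + BT)e^{AT}$, valid for arbitrary $\epsilon > 0$; the parameter $\epsilon_0$ may be chosen freely and is included in the statement only for compatibility with later lemmas.

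For the Filippov solution, the multivalued map $F(x, u)$ takes values in $\overline{co}\{f_1(x, u), f_2(x, u)\}$ in all three cases of the definition \eqref{eq:inclusion}, so $\|v\| \leq A\|x\| + B$ for every $v \in F(x, u)$ by convexity. Since $x^\xi$ is absolutely continuous with $\dot{x}^\xi(t) \in F(x^\xi(t), u(t))$ almost everywhere, the same integral inequality and Gronwall argument apply with the identical constant $C$.

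The main obstacle here is essentially non-existent: the only subtle point worth flagging is that $\pderiv{}{x}f^\epsilon$ grows like $1/\epsilon$ inside the regularization region $\Sigma^\epsilon$, which could raise a concern that $f^\epsilon$ itself behaves badly as $\epsilon \to 0$, but the \emph{values} of $f^\epsilon$ remain bounded precisely by the convex-combination structure, so the Gronwall estimate is entirely insensitive to the regularization parameter. This is what makes a single constant $C$ work across all $\epsilon$.
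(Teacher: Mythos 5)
Your proposal is correct, and the core of it coincides with the paper's argument: the paper likewise extracts a uniform linear-growth bound $\max\set{\norm{f_1(x,u)},\norm{f_2(x,u)}}\leq K(1+\norm{x})$ from Lipschitz continuity and boundedness of $U$, observes that $f^\epsilon$ inherits this bound because it is a pointwise convex combination of $f_1$ and $f_2$, and closes with a Gronwall estimate together with compactness of $D$ — exactly your treatment of $\oldphi_t^\epsilon$, including the observation that the constant is independent of $\epsilon$. Where you genuinely diverge is the bound on the Filippov solution $\oldphi_t(\xi)$: the paper does not rerun Gronwall on the non-smooth trajectory, but instead invokes Lemma \ref{lemma:convergence1} to transfer the uniform bound from the regularized trajectories to the Filippov ones via the $O(\epsilon)$ convergence estimate (which is also why the paper's statement naturally carries an $\epsilon_0$). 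You instead bound the Filippov solution directly, noting that every selection of $F(x,u)$ lies in $\overline{co}\set{f_1(x,u),f_2(x,u)}$ and hence obeys the same linear-growth estimate, so the identical Gronwall argument applies to the absolutely continuous solution of the inclusion. Your route is more self-contained and slightly stronger in spirit — it needs Assumption \ref{ass:transverse} only for existence/uniqueness of Filippov solutions, it does not rely on the convergence lemma, and it yields the bound for every $\epsilon>0$ with $\epsilon_0$ purely decorative — while the paper's route is shorter given that Lemma \ref{lemma:convergence1} is already in hand. Either argument establishes the lemma.
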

\begin{proof}
The proof follows closely the steps of \cite[Proposition 5.6.5]{polak2012optimization}. First, note that by the Lipshchitz continuity of $f_1$ and $f_2$ and the boundedness of $U$ there exists a constant $K> 0$ such that for each $x \in \R^n$ and $u \in U$ we have
\begin{equation}
\max\set{f_1(x,u), f_2(x,u)} \leq K (1 + \norm{x}).
\end{equation}
Furthermore we also see that:
\begin{equation}
\norm{f^\epsilon(x,u)} = \norm{\p{1-\phi\p{\frac{g(x)}{\epsilon}}}f_1(x,u) + \phi\p{\frac{g(x)}{\epsilon}}f_2(x,u)} \leq \max \set{\norm{f_1(x,u)}, \norm{f_2(x,u)}}  \leq K(1+ \norm{x})
\end{equation}
For each $\epsilon>0$ and $\xi  = (x_0, u) \in \mc{X}$ we also have
\begin{equation}
\norm{\oldphi_t^\epsilon(\xi)} \leq \norm{x_0} + \int_{0}^t \norm{f^\epsilon(\oldphi_t(\xi), u(t))} \leq K (1+ \oldphi_t^\epsilon(\xi)). 
\end{equation}
Using a standard Gronwall in equality \cite[Lemma 5.6.4]{polak2012optimization}, we conclude that 
\begin{equation}
\norm{\oldphi_t(\xi)} \leq e^K \p{1+ \norm{x_0}}.
\end{equation}
Recalling that $D$ is bounded and invoking Lemma \ref{lemma:convergence1} demonstrates the desired results. 
\end{proof}

\subsection{Proof of Lemma \ref{lemma:bounded}}
The following proof is rather involved, so we begin by making a number of simplifying assumptions for which we incur no loss of generality. To begin, we will assume that the nominal trajectory of the discontinuous system reaches the surface of discontinuity exactly once; the generalization to the case where it reaches the surface a finite number of times is straightforward. That is, we assume there is a single time $t_1 \in \mc{T}^\xi$. Without loss, we assume that $x_0 \in D_1$. To ease our analysis, we will assume that the function $g$ is such that for each $x = (x_1, \dots, x_n)^T \in \R^n$ we have $g(x) = x_1$. Note that by the Implicit Function Theorem there exists a set of coordinates about each point which makes this assumption true.

Furthermore, for claritrty of exposition, rather than prove the result using the adjoint equations, we will demonstrate it using the \emph{equations of sensitivity}. In particular, by the chain rule, we may also calculate $DL^\epsilon(\xi ; \delta \xi)$ as 
\begin{equation}
DL^\epsilon(\xi ; \delta \xi) = \deriv{}{x}L(\oldphi_T^\epsilon(\xi)) \cdot D\oldphi_T^\epsilon(\xi ; \delta \xi),
\end{equation}
where for each $t \in \sp{0,T}$ we define
\begin{equation}
D\oldphi_t^\epsilon(\xi ; \delta \xi) = \lim_{\lambda \downarrow 0}\frac{\oldphi_t^\epsilon(\xi + \lambda \delta \xi) - \oldphi_t^\epsilon(\xi)}{\lambda}
\end{equation}
where for each $t \in \sp{0,T}$ by \cite[Theorem 5.6.8]{polak2012optimization} we have
\begin{equation}\label{eq:sensitivity}
D\oldphi_t^\epsilon(\xi; \delta \xi) = \Phi(t,0) \delta x_0 + \int_{0}^{t} \Phi(t,\tau) \pderiv{}{u}f^\epsilon(\oldphi_t(\xi),u(\tau)) \delta u(\tau)\ \ \ \delta x(0) = \delta x_0,
\end{equation}
and, for each $\tau \in \sp{0,T}$, $\Phi(t, \tau)$ is the solution to
\begin{equation}
\deriv{}{t}\Phi(t,\tau) = \deriv{}{x}f^{\epsilon}(\oldphi_t^\epsilon(\xi),u(t)) \Phi(t,\tau), \ \ \ \Phi(\tau, \tau) = I,
\end{equation}
where in the above equation $I$ denotes the identity matrix. Note that since $\deriv{}{x}\oldphi_t(\xi)$

It is straightforward to check that, while $\pderiv{}{x}f^{\epsilon}(x,u)$  is of order $\frac{1}{\epsilon}$ in the region of regularization, $\pderiv{}{u}f^{\epsilon}(x,u)$ is bounded on bounded sets. Thus, if we can show that the state transition matrix in \eqref{eq:sensitivity} is bounded for each $t, \tau \in \sp{0,T}$, the desired result will follow immediately.

In order to demonstrate this fact, it is sufficient to demonstrate that the solution $\delta x^\epsilon \colon \sp{0,T}\to \R^n$ to
 \begin{equation}\label{eq:variation1}
 \deriv{}{t}\delta x^\epsilon(t) = \pderiv{}{x}f^\epsilon(\oldphi_t^\epsilon(\xi),u(t))\delta x^\epsilon(t), \ \ \ \delta x^\epsilon(0) =  \bar{x}_0
 \end{equation}
 remains bounded for each choice of initial condition $\bar{x}_0 \in \R^n$. Since we have assumed that the gradients of $f_1$ and $f_2$ are Lipschitz continuous, the solution to \eqref{eq:variation1} will remain bounded when the nominal regularized trajectory is away from the surface of discontinutiy; thus, we need only to show that the equations of sensitivity remain bounded when the nominal trajectory passes through the region of regularization.
 
 Throughout the proof, we will study the behavior of the sensitivity equations in three cases. We will first consider the case where he nominal trajectory of the discontinuous system crosses the surface of discontinuity transversally. In the second case, we will examine when the nominal trajectory of the discontinuous system begins to slide along the surface of discontinuity at time $t_1$. In the final case we consider, we will examine situations where the nominal trajectory of the discontinuous system either remains on the surface of discontinuity in an open interval containing $t_1$ without sliding, or instead instantaneously crosses the surface of discontinuity but does not do so transversally. 
 
 However, before addressing these various cases we first introduce notation and provide a preliminary analysis which will aid our discussion. We begin by defining for each $\epsilon>0$ and $t \in \sp{0,T}$
\begin{equation}
\phi^\epsilon(t) = \phi\p{\frac{g(\oldphi_t^\epsilon(\xi))}{\epsilon}}
\end{equation}
\begin{equation}
\omega^\epsilon(t) = \phi'\p{\frac{g(\oldphi_t^\epsilon(\xi))}{\epsilon}}
\end{equation}
\begin{equation}
A_1^\epsilon(t) = \pderiv{}{x}f_1(\oldphi_t^\epsilon(\xi),u(t))
\end{equation}
\begin{equation}
A_2^\epsilon(t) = \pderiv{}{x}f_2(\oldphi_t^\epsilon(\xi),u(t))
\end{equation}
\begin{equation}
\bar{f}^\epsilon(t) = (f_2(\oldphi_t^\epsilon(\xi),u(t))-f_1(\oldphi_t^\epsilon(\xi),u(t)) )
\end{equation}
We also define the following constant which will be used repeatedly in our analysis:
\begin{equation}
C_1 = \max_{a \in (-1, 1)} \pderiv{}{a} \phi(a)
\end{equation}
where we note that $C_1$ exists and is finite due to our assumptions on the partial derivatives of $\phi$. By our assumption that the nominal trajectory of the discontinuous system arrives at the surface of discontinuity transversally and Lemma \ref{lemma:convergence1} for each $\epsilon$ sufficiently small we may define
\begin{equation}
t^\epsilon = \inf\set{ t \in \sp{0,T} \colon \oldphi^\epsilon_t(\xi) \in \Sigma^\epsilon}
\end{equation}
to be the time instant at which the corresponding regularized trajectory first arrives at the region of regularization.

With the above notation we can compactly write the partial derivative
\begin{equation}
\pderiv{}{x}f^\epsilon(\oldphi_t(\xi),u(t)) = (1 - \phi^\epsilon(t))A_1^\epsilon(t) + \phi^\epsilon(t)A_2^\epsilon(t) + \frac{1}{\epsilon}\omega^\epsilon(t) \bar{f}^\epsilon(t)\nabla g(\oldphi_t^\epsilon(\xi))
\end{equation}

Applying Lemma \ref{lemma:total_bound}, and our assumption that the gradients of $f_1$ and $f_2$ are Lipzschitz, it is straightforward to bound for each $t \in \sp{0,T}$
\begin{equation}
\pderiv{}{x}f^\epsilon(\oldphi_t(\xi),u(t)) \leq M + M \omega^\epsilon(t)
\end{equation}
for some $M>0$, by noting that $\phi^\epsilon(t) \in \sp{0,1}$. 
 Applying a standard Gronwall inequality \cite[5.6.4]{polak2012optimization} we see that
\begin{equation}\label{eq:boundy1}
\norm{\delta x(t)}  \leq \exp\p{\int_{0}^t M + M\frac{1}{\epsilon}\omega^\epsilon(\tau) } \norm{\bar{x}} \leq  C_2 \exp\p{\int_{0}^t \frac{1}{\epsilon}M \omega^\epsilon(\tau) d \tau}\norm{\bar{x}}
\end{equation}
for some $C_2>0$ sufficiently large. 

This trivial bound allows us to simply bound the sensitivity of the regularized system in the case where the nominal trajectory of the discontinuous system crosses the surface of discontinuity transversally. Indeed, consider the case where there exists $\delta > 0 $ such that for almost every $t \in (t_1 - \delta , t_1 +\delta)$ we have $L_{f_1}g(\oldphi_t(\xi), u(t)) >0$ and $L_{f_2}g(\oldphi_t(\xi),u(t)) > 0$. Note that for $\epsilon$ sufficiently small we will also have $L_{f_1}(\oldphi^\epsilon_
t(\xi), u(t)) >0$ and $L_{f_2}(\oldphi^\epsilon_
t(\xi), u(t)) >0$ for each $t \in (t_1 -\delta, t_1+\delta)$.
Since the nominal trajectory of the discontinuous system passes transversally across the surface of discontinuity, it is straightforward to see that for each $\epsilon$ small enough the corresponding relaxed trajectory will spend at most on the order of $\epsilon$ units of time in the region of regularization. In other words, $\exists K >0$ such that $\oldphi_t^\epsilon(\xi) \notin \Sigma^\epsilon$ for each $t \not \in (t_1- K\epsilon, t_1 +k \epsilon)$. Combining this fact with \eqref{eq:boundy1}, the fact that $\omega^\epsilon(t) = 0$ if $\oldphi^\epsilon_t(\xi) \not \in \Sigma^\epsilon$ and the fact that $\omega^\epsilon(t)$ is bounded, it is straightforward to verify that in this case $\delta x^\epsilon(t)$ remains bounded for each $t \in \sp{0,T}$. Indeed we see that
\begin{equation}
\norm{\delta x(t)} \leq C_2 \exp\p{\int_{0}^t \frac{1}{\epsilon}M \omega^\epsilon(\tau) d \tau}\leq  C_2 \exp\p{\int_{t_1 -K\epsilon}^{t_1 + K \epsilon} M\frac{1}{\epsilon}C_1d \tau } \leq C
\end{equation}
for some $C>0$ sufficiently large, as desired.

In the second two cases that we consider, we will not be able to upperbound the time spent in the region of regularization on the order of $\frac{1}{\epsilon}$. Before proceeding to these more delicate conditions, we introduce some additional terminology which will clarify the intuition behind our approach to obtaining a bound in this case.

 First, we will further decompose
\begin{equation}\label{eq:diff_second}
\delta x^\epsilon(t) = \smat{\delta x_1^\epsilon(t) \\ \delta x_2^\epsilon(t)},
\end{equation}
where $\delta x_1^\epsilon \colon \sp{0,T} \to \R$ is the first entry of $\delta x^\epsilon$, and $\delta x^\epsilon_2 \colon \sp{0,T} \to \R^{n-1}$ represents the remaining n-1 entries. Given this notation, and our simplifying assumption on $g$ which implies $\nabla g(x) =\sp{1, 0, \dots , 0}$, we may rewrite \eqref{eq:variation1} as
\begin{equation}
\deriv{}{t} \delta x_1^\epsilon(t) =\p{ \bar{A}_{11}^\epsilon(t) + \frac{1}{\epsilon}\omega^\epsilon(t) \bar{f}_1^\epsilon(t)} \delta x_1^\epsilon(t) + \bar{A}_{12}^\epsilon(t) \delta x_2^\epsilon(t)
\end{equation}
and 
\begin{equation}\label{eq:diff_first}
\deriv{}{t}\delta x_2^\epsilon(t) = \p{ \bar{A}_{21}^\epsilon(t) +\frac{1}{\epsilon}\omega^\epsilon(t) \bar{f}_2^\epsilon(t)} \delta x_1^\epsilon(t) + \bar{A}_{22}^\epsilon(t)\delta x^\epsilon_2(t)
\end{equation}
where  $\bar{f}_1^\epsilon\colon \sp{0,T} \to \R$ denotes the first entry of $\bar{f}^\epsilon$ and $\bar{f}_2^\epsilon\colon \sp{0,T} \to \R^{n-1}$ denotes the $n-1$ remaining entries, and $\bar{A}_{11}^\epsilon \colon\sp{0,T} \to \R$, $\bar{A}_{12}^\epsilon \colon\sp{0,T} \to \R \times \R^{(n-1)}$, $\bar{A}_{21} \colon \sp{0,T} \to \R^{(n-1)} \times \R$ and $\bar{A}_{21}^\epsilon \colon \sp{0,T} \to \R^{(n-1)} \times \R^{(n-1)}$ are such that 
\begin{equation}
(1 -\phi^\epsilon(t))A_1^\epsilon(t)+ \phi^\epsilon(t)A_2^\epsilon(t) = \smats{\bar{A}_{11}^\epsilon(t) & \bar{A}_{12}^\epsilon(t) \\ \bar{A}_{21}^\epsilon(t) & \bar{A}_{22}^\epsilon(t)}.
\end{equation}
Solving these linear time-varying systems, we have that
\begin{multline}\label{eq:first}
\delta x_1^\epsilon(t) = \exp\p{\int_{0}^{t}  \bar{A}_{11}^\epsilon(\tau) + \frac{1}{\epsilon} \omega^\epsilon(\tau) \bar{f}_1^\epsilon(\tau) d\tau} \bar{x}_0^1 \\+ \int_{0}^{t} \exp\p{\int_{\tau}^{t} \bar{A}_{11}^\epsilon(s) +  \frac{1}{\epsilon} \omega^\epsilon(s) \bar{f}_1(s) ds} \bar{A}^\epsilon_{12}(\tau)\delta x_2^{\epsilon}(\tau)d \tau
\end{multline}
and 
\begin{equation}\label{eq:second}
\delta x_2^\epsilon(t) =  \exp\p{\int_{0}^{t} \frac{1}{\epsilon} \bar{A}_{22}^\epsilon(\tau) d\tau} \bar{x}_0^2 \\+ \int_{0}^{t} \exp\p{\int_{\tau}^{t}\bar{A}_{22}^\epsilon(s)ds} \p{\bar{A}_{12}^\epsilon(\tau) +\frac{1}{\epsilon}\omega^{\epsilon}(\tau) \bar{f}_2^\epsilon}\delta x_1^{\epsilon}(\tau) d\tau
\end{equation}
where $\bar{x}_0^1 \in \R$ is the first entry of $\bar{x}_0$, and $\bar{x}_0^2 \in \R^{(n-1)}$ represents the remaining $(n-1)$ entries. Here we immediately see the challenge of bounding the equations of sensitivity in this case. In particular, we observe that the solutions \eqref{eq:first} and \eqref{eq:second} will "blow up" unless $\delta x_1^\epsilon$ goes to zero rapidly upon entering the region of regularization. More accurately, we will need to show that $\delta x_1^\epsilon$ decays exponentially to zero at a rate that is on the order of $\frac{1}{\epsilon}$ to ensure that this does not occur.

We are now ready to consider the next case in our analysis. In this case, we assume that $\exists \delta >0$ such that for almost every $t \in (t_1 -\delta, t_2+\delta)$ we have $L_{f_1}g(\oldphi_t(\xi), u(t))>0$ and $L_{f_2}g(\oldphi_t(\xi), u(t))<0$. In this case the nominal trajectory of the discontinuous system begins to slide along the surface of discontinuity at time $t_1$. Crucially, we note that in this case for each $t \in (t_1 -\delta , t_1 +\delta)$ we will have
\begin{equation}
\bar{f}_1^\epsilon(t) < - \hat{f}
\end{equation}
for some $\hat{f} > 0$ and each $\epsilon$ sufficiently small, since $f_1$ and $f_2$ are both "pointing into" the surface of regularization in this case. By our assumption that $L_{f_1}g(\oldphi_t(\xi), u(t)) >0$ and $L_{f_2}g(\oldphi_t(\xi),u(t)) <0$, and our assumption that $\phi$ is strictly increasing on $(-1,1)$, we see that when $\epsilon$ is sufficiently small that the regularized trajectory will flow to the relative interior of $\Sigma^\epsilon$ after arriving at the region of regularization at time $t^\epsilon$, and remain in the relative interior of $\Sigma^\epsilon$ until at least time $t_1 +\delta$. Moreover, since $\deriv{}{x}\phi(\frac{g(x)}{\epsilon}) >0$ for each $x \in \R^n$ which lies in the relative interior of $\Sigma^\epsilon$, we see that there exists $k_1>0$ such that  $\omega^\epsilon(t) > k_1$ for each $t \in (\bar{t}^\epsilon, t_1 + \delta)$ where we define 
\begin{equation}
\bar{t}^\epsilon = t^\epsilon +\epsilon.
\end{equation}
Thus, the differential equation \eqref{eq:diff_first} will be damped on the order of $\frac{1}{\epsilon}$ when the regularized trajectory is in the region of regularization. However, the differential equation \eqref{eq:diff_second} will will simultaneously "blow up" exponentially at a rate that is on the order of $\frac{1}{\epsilon}$. Due to the interconnection of these two differential equations, it is not imediatly clear which of these effects will prevail. Nonetheless, we will still show that for $\epsilon$ sufficiently small $\delta x_1^\epsilon$ still decays to zero exponentially at a rate that is on the order of $\frac{1}{\epsilon}$. Once $\delta x^\epsilon_2(t)$ decays to the point that it is $(O(\epsilon))$, the right hand sides of \eqref{eq:diff_first} and \eqref{eq:diff_second} will $O(1)$ for the rest of the time that the regularized trajectory remains in the region of regularization, and we will have obtained the bound that we desire.

Before proceding to our main analytic step, we first develop a number of useful intermediate bounds. By lemma \ref{lemma:total_bound} we see that $\bar{A}_{ij}^\epsilon(t)$ is bounded for each $t \in \sp{0,T}$ and $i , j \in \set{1,2}$, and $\bar{f}^\epsilon_2(t)$ is also bounded for each $t \in \sp{0,T}$, and and thus using familiar  properties of the exponential function we bound for each $t \in \sp{0,T}$
\begin{multline}\label{eq:boundy2}
\norm{\delta x_1^\epsilon(t)} \leq \exp\p{\int_0^{t} \bar{A}_{11}^\epsilon(\tau) d \tau } \exp\p{\int_0^t \frac{1}{\epsilon} \omega^\epsilon(\tau)\bar{f}_1^\epsilon(\tau) d \tau} \norm{\bar{x}_0} \\+ \int_{0}^t\exp\p{\int_\tau^{t} \bar{A}_{11}^\epsilon(s) d s } \exp\p{\int_\tau^t \frac{1}{\epsilon} \omega^\epsilon(s)\bar{f}_1^\epsilon(s) d s} \norm{\delta x_2^{\epsilon}(\tau)} d \tau \\
\leq C_3 \exp\p{\int_{0}^{t}  \frac{1}{\epsilon} \omega^\epsilon(\tau) \bar{f}_1(\tau) d\tau} \norm{\bar{x}_0} +  C_3 \int_{0}^{t} \exp\p{\int_{\tau}^{t}  \frac{1}{\epsilon} \omega^\epsilon(s) \bar{f}_1(s) ds}\norm{\delta x_2^{\epsilon}(\tau)}d \tau
\end{multline}
and
\begin{multline}\label{eq:boundy3}
\norm{\delta x_2^\epsilon(t)} \leq  \exp\p{ \int_0^t \bar{A}_{22}^\epsilon(\tau) d \tau}\norm{\bar{x}_0} + \int_0^t \exp\p{ \int_\tau^t \bar{A}_{22}^\epsilon(s) d s}\frac{1}{\epsilon}\omega^\epsilon(\tau)\bar{f}_2^\epsilon(\tau) \\
 \leq 
C_3 \norm{\bar{x}_0} + C_3\int_0^t \frac{1}\epsilon \omega^{\epsilon}(\tau) \norm{\delta x^\epsilon_1(\tau)} d \tau
\end{multline}
where we chose $C_3$ to be sufficiently large and we note that $\norm{\bar{x}_0^1}, \norm{\bar{x}_0^2} \leq \norm{\bar{x}_0}$ by the Triangle Inequality. 

Using these new bounds, for each $t \in (\bar{t}^\epsilon, t_1 + \delta)$ we are employ \eqref{eq:boundy2}  and \eqref{eq:boundy1} and our definitions of $k_1$ and $\hat{f}$ we further decompose and bound 
\begin{multline}\label{eq:boundy5}
\norm{\delta x_1^\epsilon(t)} \leq C_3  \exp\p{\int_{t^\epsilon}^{\bar{t}^\epsilon}  \frac{1}{\epsilon} \omega^\epsilon(\tau) \bar{f}_1(\tau) d\tau} \exp\p{\int_{\bar{t}^\epsilon}^{t}  -\frac{1}{\epsilon}   k_1 \hat{f}  d\tau} \norm{\bar{x}_0} \\
 +  C_3 \int_{t^\epsilon}^{\bar{t}^\epsilon} \exp\p{\int_{\tau}^{t}  \frac{1}{\epsilon} \omega^\epsilon(s) \bar{f}_1(s) ds}\norm{\delta x_2^{\epsilon}(\tau)}d \tau + C_3\int_{\bar{t}^\epsilon}^{t} \exp\p{\int_{\tau}^{t}  -\frac{1}{\epsilon}   k_1 \hat{f} ds}\norm{\delta x_2^{\epsilon}(\tau)}d \tau \\
 \leq C_3  \exp\p{\int_{t^\epsilon}^{\bar{t}^\epsilon}  \frac{1}{\epsilon} \omega^\epsilon(\tau) \bar{f}_1(\tau) d\tau} \exp\p{\int_{\bar{t}^\epsilon}^{t}  -\frac{1}{\epsilon}   k_1 \hat{f}  d\tau} \norm{\bar{x}_0} \\
 + C_3 \int_{t^\epsilon}^{\bar{t}^\epsilon} \exp\p{\int_{\tau}^{t}  \frac{1}{\epsilon} \omega^\epsilon(s) \bar{f}_1(s) ds}C_2\exp\p{\int_0^t \frac{1}{\epsilon} M \omega^\epsilon(s)ds}\norm{\bar{x}_0}d \tau + C_3\int_{\bar{t}^\epsilon}^{t} \exp\p{\int_{\tau}^{t}  -\frac{1}{\epsilon}   k_1 \hat{f} ds}\norm{\delta x_2^{\epsilon}(\tau)}d \tau\\
\leq C_4 \exp\p{\int_{\bar{t}^\epsilon}^{t} -\frac{1}{\epsilon}\hat{f}k_1 d\tau} \norm{\bar{x}_0} +  C_4\int_{\bar{t}^\epsilon}^{t} \exp\p{\int_{\tau}^{t}  -\frac{1}{\epsilon}  \hat{f}k_1 ds}\norm{\delta x_2^{\epsilon}(\tau)}d \tau
\end{multline}
for some $C_4>0$ sufficiently large, where we have use \eqref{eq:boundy1}. 

We can follow a similar set of steps and employ  \eqref{eq:boundy1} to obtain:
\begin{multline}\label{eq:boundy6}
\norm{\delta x_2^\epsilon(t)} \leq C_2 \exp\p{\int_{0}^t \frac{1}{\epsilon}M \omega^\epsilon(\tau) d \tau}\norm{\bar{x}_0}\\
 \leq C_2 \exp\p{\int_{\hat{t}^\epsilon}^{\hat{t}^\epsilon} \frac{1}{\epsilon}M \omega^\epsilon(\tau) d \tau} \exp\p{\int_{\hat{t}^\epsilon}^t \frac{1}{\epsilon}M \omega^\epsilon(\tau) d \tau}\norm{\bar{x_0}}\\ 
\leq C_5 \exp\p{\int_{\hat{t}^\epsilon}^t \frac{1}{\epsilon}M C_2 d \tau}\norm{\bar{x_0}}
\end{multline}
for some $C_5 >0$ sufficiently large.

Here, we pause to inspect the last inequality in \eqref{eq:boundy5}. In order to show that $\delta x_1^\epsilon$ decays exponentially on the order of $\frac{1}{\epsilon}$, we will need to bound $\norm{\delta x_1^\epsilon}$ on an interval of appropriate length. In particular, observe that if we can show that $\norm{\delta x^{\epsilon}(t)} < C$ for some $C>0$ for each $t \in (\bar{t}^\epsilon, \bar{t}^\epsilon + \frac{1}{k_2*\hat{f}}*\epsilon \ln(\frac{1}{\epsilon}))$, then $\delta_1^\epsilon(t)$ will reach a value of $O(\epsilon)$ by then end of this interval. Once this occurs, both $\delta x^\epsilon_1$ and $\delta x_2^\epsilon$ will vary at a rate that is on the order of $O(1)$ for the rest of the time that the regularized trajectory remains in the region of regularization, and we will have obtained the bound we desire. Though we will prove this more carefully later, we provide this commentary to provide an intuition for our following steps, which my be otherwise difficult to follow. 

In order to bound $\delta x^\epsilon_2$ uniformly on the desired interval, we will use the following procedure to repeatedly tighten bounds that we have already obtained for $\delta x_1^\epsilon$ and $\delta x_2^\epsilon$ in an inductive manner. In particular, we will begin by plugging in \eqref{eq:boundy6} into \eqref{eq:boundy5} and then taking several additional steps. The result will be a tighter version of \eqref{eq:boundy6}. We will then plug this tight bound into \eqref{eq:boundy5} and and again repeat the intermediate steps until we have a uniform bound for $\norm{\delta x_1^\epsilon(t)}$ on our desired interval.

To begin, we plug \eqref{eq:boundy6} into \eqref{eq:boundy5}, and integrate to obtain:
\begin{multline}\label{eq:step_1}
\norm{\delta x_1^\epsilon(t)} \leq C_4 \exp\p{-\frac{1}{\epsilon}\hat{f}k_1 (t- \bar{t}^\epsilon)}||\bar{x}_0|| +C_4 C_5\int_{\bar{t}^\epsilon}^{t} \exp\p{-\frac{1}{\epsilon}k_1 \hat{f}(t -\tau)}\exp\p{\frac{1}{\epsilon}C_2 M(\tau -\bar{t}^{\epsilon})} \norm{x_0} d \tau \\ 
= C_4 \exp\p{-\frac{1}{\epsilon}\hat{f}k_1 (t- \bar{t}^\epsilon)}||\bar{x}_0||  + C_4C_5\frac{\epsilon}{M*C_2 + k_1*\hat{f}} \p{\exp\p{\frac{1}{\epsilon}C_2*M (t -\bar{t}^\epsilon)} - 1 )}\norm{\bar{x}_0}.
\end{multline}
Next we plug this expression into \eqref{eq:boundy3} and again use $\omega^\epsilon(t) \leq C_2$ to obtain
\begin{multline}
\norm{\delta x_2^\epsilon(t)} \leq C_3 \norm{\bar{x}_0} +\norm{\bar{x}_0} C_3\int_{\bar{t}^\epsilon}^t \frac{1}\epsilon C_4\bigg[ \exp\p{-\frac{1}{\epsilon}\hat{f}k_1 (t- \bar{t}^\epsilon)} \\ + C_5\frac{\epsilon}{M*C_2 + k_1*\hat{f}} \p{\exp\p{\frac{1}{\epsilon}C_2*M (t -\bar{t}^\epsilon)} - 1 )} \bigg] d \tau
\end{multline}
and then pulling out constants we then obtain
\begin{multline}
\norm{\delta x_2^\epsilon(t)} \leq C_6 \norm{\bar{x}_0 }\bigg[ 1 + \int_{\bar{t}^\epsilon}^t \frac{1}\epsilon\bigg[ \exp\p{-\frac{1}{\epsilon}\hat{f}k_1 (t- \bar{t}^\epsilon)}  +\epsilon \exp\p{\frac{1}{\epsilon}C_2*M (t -\bar{t}^\epsilon)}  \bigg] d \tau \bigg]
\end{multline}
for some $C_6>0$ sufficiently large, and then we integrate to obtain
\begin{equation}
\norm{\delta x_2^\epsilon(t)} \leq C_6 \norm{\bar{x}_0 }\bigg[ 1 + \frac{1}{\hat{f}*k_1}\bigg[ \exp\p{-\frac{1}{\epsilon}\hat{f}k_1 (t- \bar{t}^\epsilon)} -1 \bigg]  +\frac{\epsilon}{C_2*M}\bigg[ \exp\p{\frac{1}{\epsilon}C_2*M (t -\bar{t}^\epsilon)}  -1 \bigg] \bigg].
\end{equation}
Again pulling out constants, and noting that for each appropriate choice of $t$ and $\tau$ we have
\begin{equation}
\exp\p{-\frac{1}{\epsilon}\hat{f}k_1 (t- \bar{t}^\epsilon)}\leq 1
\end{equation}
we then obtain
\begin{equation}\label{eq:ep_bound1}
\norm{\delta x_2^\epsilon(t)} \leq C_7 \norm{\bar{x}_0} \p{1 + \epsilon \exp\p{\frac{1}{\epsilon}C_2*M (t -\bar{t}^\epsilon)}}.
\end{equation}
for some $C_7>0$ sufficiently large.
On the desired interval $t \in (\bar{t}^\epsilon, \bar{t}^\epsilon + \frac{1}{k_2*\hat{f}}*\epsilon \ln(\frac{1}{\epsilon}))$ we have that
\begin{multline}
\norm{\delta x_2^\epsilon(t)} \leq C_7 \norm{\bar{x}_0} \p{1 + \epsilon \exp\p{\frac{C_2*M}{k_2*\hat{f}} \ln(\frac{1}{\epsilon})}} = C_7 \norm{\bar{x}_0}\p{1 + \epsilon *\epsilon^{-\frac{C_2*M}{k_2*\hat{f}}}} = C_7 \norm{\bar{x}_0}\p{1 + \epsilon^{1-\frac{C_2*M}{k_2*\hat{f}}}}
\end{multline}
In the case that $\frac{C_2*M}{k_2*\hat{f}}<1$ we will have that $\delta x^\epsilon_2$ is bounded on the desired interval. However, in the case that   $\frac{C_2*M}{k_2*\hat{f}}>1$ the above bound will tend to infinity as we take $\epsilon \to 0$ and will not serve our needs. However, we can now plug $\eqref{eq:ep_bound1}$ into our above analysis to tighten the bound. In particular, rather than plug \eqref{eq:boundy6} into \eqref{eq:boundy5} in equation \eqref{eq:step_1}, we can now put in the improved bound \eqref{eq:ep_bound1}, and follow the steps take from \eqref{eq:step_1} to \eqref{eq:ep_bound1} to improve the bound. Though we omit the details in the interest of brevity, it can be shown that this proccess yields the following bound:
\begin{equation}
\norm{\delta x_2^\epsilon(t)} \leq C_8 \norm{\bar{x}_0} \p{1 + \epsilon^2 \exp\p{\frac{1}{\epsilon}C_2*M (t -\bar{t}^\epsilon)}},
\end{equation}
for some $C_8>0$ which is chosen to be sufficiently large. If we instead repeat the above process $p$ times, where $p> \frac{C_2*M}{k_2*\hat{f}}$, then we obtain abound of the form
\begin{equation}
\norm{\delta x_2^\epsilon(t)} \leq C_p \norm{\bar{x}_0} \p{1 + \epsilon^p \exp\p{\frac{1}{\epsilon}C_2*M (t -\bar{t}^\epsilon)}}
\end{equation}
for some $C_p>0$ sufficiently large, and in this case we may bound $\delta x_2^\epsilon(t)$ for each $t \in (\bar{t}^\epsilon, \bar{t}^\epsilon + \frac{1}{k_2*\hat{f}}*\epsilon \ln(\frac{1}{\epsilon}))$ by
\begin{multline}
\norm{\delta x_2^\epsilon(t)} \leq C_p \norm{\bar{x}_0} \p{1 + \epsilon^p \exp\p{\frac{C_2*M}{k_2*\hat{f}} \ln(\frac{1}{\epsilon})}} = C_p \norm{\bar{x}_0}\p{1 + \epsilon^p *\epsilon^{-\frac{C_2*M}{k_2*\hat{f}}}} = C_p \norm{\bar{x}_0}\p{1 + \epsilon^{p-\frac{C_2*M}{k_2*\hat{f}}}},
\end{multline}
and in this case we will have that for each $t \in (\bar{t}^\epsilon, \bar{t}^\epsilon + \frac{1}{k_2*\hat{f}}*\epsilon \ln(\frac{1}{\epsilon}))$
\begin{equation}
\norm{\delta x_2^\epsilon(t)} \leq C_9\norm{\bar{x}_0}, 
\end{equation}
where $C_9>0$ is chosen to be sufficiently large.
Combining this bound with \eqref{eq:boundy5}, on the interval  $t \in (\bar{t}^\epsilon, \bar{t}^\epsilon + \frac{1}{k_2*\hat{f}}*\epsilon \ln(\frac{1}{\epsilon}))$ we have
\begin{multline}\label{eq:exp}
\norm{\delta x_1^\epsilon(t)} \leq C_4 \exp\p{\int_{\bar{t}^\epsilon}^{t} -\frac{1}{\epsilon}\hat{f}k_1 d\tau} \norm{\bar{x}_0} +  C_4\int_{\bar{t}^\epsilon}^{t} \exp\p{\int_{\tau}^{t}  -\frac{1}{\epsilon}  \hat{f}k_1 ds}C_9 \norm{\bar{x}_0}d \tau \\
\leq C_{11} \exp\p{\int_{\bar{t}^\epsilon}^{t} -\frac{1}{\epsilon}\hat{f}k_1 d\tau} \norm{\bar{x}_0}  +  \epsilon C_{11} \norm{\bar{x}_0}
\end{multline}
for some $C_{11} >0$ sufficiently large and setting  $\hat{t}^\epsilon =\bar{t}^\epsilon + \frac{1}{k_2*\hat{f}}*\epsilon \ln(\frac{1}{\epsilon})$ we have
\begin{equation}
\norm{\delta_1x^\epsilon(\hat{t}^\epsilon)} \leq C_4 \epsilon \norm{\bar{x}_0} + C_9 *\frac{\epsilon}{\hat{f}k_1 }.
\end{equation}
Thus, we see that $\norm{\delta x^\epsilon}$ decays exponentially to zero at a rate of decay that is on the order of $\frac{1}{\epsilon}$, as desired. Moreover, combing \eqref{eq:exp} with \eqref{eq:exp} and integrating through we obtain for each $t \in (\bar{t}^\epsilon, \bar{t}^\epsilon + \frac{1}{k_2*\hat{f}}*\epsilon \ln(\frac{1}{\epsilon}))$
\begin{equation}
\norm{\delta x_2^\epsilon(t)} \leq C_3 \norm{\bar{x}_0} + C_3\int_0^t \frac{1}\epsilon C_2 \p{C_{11} \exp\p{\int_{\bar{t}^\epsilon}^{t} -\frac{1}{\epsilon}\hat{f}k_1 d s} \norm{\bar{x}_0}  + \epsilon C_{11}}d \tau\\
\leq C_{12} \norm{\bar{x}_0}
\end{equation} 
for some $C_{12}>0$ sufficiently large. From here it is straightforward to verify that the equations of sensitivity remain bounded for the rest of the time that the nominal trajectory is in the region of regularization. We omit a detailed argument in the interest of brevity, since it closely follows the argument made in \cite{stewart2010optimal}.

Finally, we discuss the case where the nominal trajectory of the discontinuous system either stays on the surface of discontinuity with out sliding, and the case where the nominal trajectory of the discontinuous system instantaneously crosses the surface of discontinuity, but does not do so transversally. This covers the remaining situations we must consider under Assumptions \ref{ass:diff1}-\ref{ass:diff5}. We provide only a sketch of the proofs in these case, since the arguments follow largely from our previous analysis. 

First, suppose that the nominal trajectory of the discontinuous system stays on the surface of discontinuity on the interval $(t_1, t_2)$ where $t_2 >0$ but does not slide. For almost every time $t_\in (t_1,t_2)$, we will have either $L_{f_1}g(\oldphi_t(\xi),u(t)) >0$ and $L_{f_1}g(\oldphi_t(\xi),u(t)) \leq 0$ or  $L_{f_1}g(\oldphi_t(\xi),u(t)) \geq 0$ and $L_{f_1}g(\oldphi_t(\xi),u(t)) < 0$, otherwise the trajectory would leave the surface of discontinuity before time $t_2$. Since either $f_1$ or $f_2$ is always "pointing into" the surface of discontinuity, for each $\epsilon$ small enough, we will again have $\bar{f}_1^\epsilon(t) <0$ for almost every time $t \in (t_1,t_2)$, and we will again have exponential dampening of variations that are normal to the surface of discontinuity. Thus, the analysis to show that the equations of sensitivity are bounded in this case follows closely the previous case with a few minor modifications. 

In the case where the nominal trajectory crosses the surface of discontinuity but does not do so transversally, it will be the case that there exists $\delta $ such that for almost every $t \in (t_1 -\delta ,  t_1 + \delta )$ we have $L_{f_1} g(\oldphi_t(\xi),u(t))> 0$ and $L_{f_2}g(\oldphi_t(\xi),u(t))\geq 0$. Moreover, given any $\gamma>0$ we may choose $\delta$ to be to be small enough so that $ \gamma \leq L_{f_2}g(\oldphi_t(\xi),u(t))$ for almost every $t \in (t_1 -\delta, t_1+\delta)$. This is a consequence of the trajectory not leaving the surface of discontinuity transversally. Thus, for $\epsilon$  small enough, will again have that $\bar{f}_1^\epsilon(t) <0$ for almost every $t$ such that $\sigma_t^\epsilon(\xi) \in \Sigma^\epsilon$. Thus, the previous analysis can again can be applied to this situation with minor modifications to show that the variations on the regularized trajectory remain bounded as the trajectory flows through the interior of the region of regularization. We omit the details of the proof for these final two cases, because it would involve reintroducing the large number of bounds and notations introduced in the case of sliding without providing much new insight.

\subsection{Proof of Lemma \ref{lemma:neighborhood}}
By the proof of Lemma \ref{lemma:bounded} we see that that there exists a neighborhood of $\xi$ on which the map $\oldphi_t$ is continuous for each $t \in \sp{0,T}$ (this is a well known property of Filippov solutions \cite[Chapter 2, Section 7]{filippov2013differential}). This demonstrates assumptions \ref{ass:diff1} and \ref{ass:diff5} hold in a neighborhood of $\xi$. We then see that \ref{ass:diff3} holds in a neighborhood of $\xi$ by noting that $f_1$ and $f_2$ are assumed to be continuous.

\subsection{Proof of Lemma \ref{lemma:Lipschitz}}
In order to again simplify our analysis, we again make the simplifying assumptions made in the proof of Lemma \ref{lemma:bounded}. That is, we again assume that the nominal trajectory of the discontinuous system reaches the surface of discontinuity exactly once at time $t_1 \in \mc{T}^\xi$ and that $x_0 \in D_1$. We also reuse all of the terminology and notation from the proof of Lemma \ref{lemma:bounded}. 
We again prove the desired result by studying the equations of sensitivity for the relaxed system, and demonstrating that the map $\epsilon \to D\oldphi_t(\xi ; \delta \xi)$ is Lipschitz continuous when we restrict our attention to values of $\epsilon$ that are small enough. 

For each $\epsilon>0$, we define the \emph{fast time scale} 
\begin{equation}
\tau = \frac{t- t^\epsilon}{\epsilon}
\end{equation}
on this timescale, the the dynamics of the sensitivity equations are given by
\begin{equation}
\deriv{}{\tau}\delta x^{\epsilon}(\tau) = \epsilon \sp{\p{1- \phi^{\epsilon}(\tau)}A_{11}^{\epsilon}(\tau) + \phi^\epsilon(\tau)}\delta x^\epsilon(\tau) + \omega^{\epsilon}(\tau)f_1(\tau) \delta x_1^{\epsilon}(\tau).
\end{equation}
We again begin by studying the case where the nominal trajectory of the discontinuous systems crosses the surface of discontinuity transversally. That is, we assume that there exists $\delta >0$ such that for each  $t \in (t_1-\delta , t_1 +\delta)$ we have $L_{f_1}g(\oldphi_t(\xi), u(t)) >0$ and $L_{f_2}g(\oldphi_t(\xi),u(t))>0$. Similarly, to the proof of Lemma \ref{lemma:bounded}, our analysis in this case largely relies on the fact that for each $\epsilon$ small enough the corresponding regularized trajectory will spend on the order of $\epsilon$ units of time in the region of regularization. In particular, we begin by bounding:

\newpage a \newpage

\bibliography{paper}

\end{document}